\documentclass[12pt]{amsart}

\usepackage{graphicx}
\usepackage{amssymb}

\usepackage{multido}
\usepackage[utf8]{inputenc}    
\usepackage[T1]{fontenc}
\usepackage{hyperref}

\hypersetup{
 colorlinks=true,       
    linkcolor=cyan,          
    citecolor=green,        
    filecolor=magenta,      
   urlcolor=cyan           
   }

 
\usepackage{shuffle}

\DeclareSymbolFont{bbold}{U}{bbold}{m}{n}
\DeclareSymbolFontAlphabet{\mathbbold}{bbold}
\newcommand{\ind}{\mathbbold{1}}

\allowdisplaybreaks
\tolerance=2400

\numberwithin{equation}{section}
\numberwithin{figure}{section}

\newtheorem{thm}{Theorem}[section]
\newtheorem{cor}[thm]{Corollary}

\newtheorem{lem}[thm]{Lemma}
\newtheorem{prop}[thm]{Proposition}
\theoremstyle{definition}
\newtheorem{defn}[thm]{Definition}
\theoremstyle{remark}
\newtheorem{rem}[thm]{Remark}
\newtheorem{ex}[thm]{Example}


\newcommand{\bE}{\mathbb{E}}

\newcommand{\bI}{\mathbb{I}}
\newcommand{\bJ}{\mathbb{J}}
\newcommand{\bN}{\mathbb{N}}
\newcommand{\bP}{\mathbb{P}}
\newcommand{\bR}{\mathbb{R}}
\newcommand{\bW}{\mathbb{W}}

\newcommand{\cA}{\mathcal{A}}
\newcommand{\cP}{\mathcal{P}}
\newcommand{\cH}{\mathcal{H}}
\newcommand{\cS}{\mathcal{S}}
\newcommand{\cW}{\mathcal{W}}

\newcommand{\Tau}{T}

\newcommand{\PP}{\mathbb{P}}

\newcommand{\independent}{\bot \!\!\! \bot}

\allowdisplaybreaks
\tolerance=2400

\begin{document}

\title[Growing random words]{Doob-Martin compactification of a Markov chain for growing random words sequentially}
\author[H.S. Choi]{Hye Soo Choi} 
\email{hyeso0oa@gmail.com}
 
\author[S.N. Evans]{Steven N. Evans}
\thanks{S.N.E. was supported in part by NSF grants DMS-0907639 and DMS-1512933, and NIH grant 1R01GM109454-01}
\address{Department of Statistics \#3860\\
 367 Evans Hall \\
 University of California \\
  Berkeley, CA  94720-3860 \\
   USA}
\email{evans@stat.berkeley.edu}

\begin{abstract}
We consider a Markov chain that iteratively generates a sequence of random finite words in such a way that the 
$n^{\mathrm{th}}$ word is uniformly distributed over the set of words of length $2n$ in which 
$n$ letters are $a$ and $n$ letters are $b$: at each step an $a$ and a $b$ are shuffled in 
uniformly at random among the letters of the current word.  We obtain a concrete characterization of the 
Doob-Martin boundary of this Markov chain and thereby delineate all the ways in which the Markov chain 
can be conditioned to behave at large times. Writing $N(u)$ for the number of letters $a$ (equivalently, $b$) 
in the finite word $u$, we show that a sequence $(u_n)_{n \in \bN}$ of finite words converges to a point in the 
boundary if, for an arbitrary word $v$, there is convergence as $n$ tends to infinity of the probability that 
the selection of $N(v)$ letters $a$ and $N(v)$ letters $b$ uniformly at random from $u_n$
and maintaining their relative order results in $v$.  
We exhibit a bijective correspondence between the points in the boundary and ergodic random total orders 
on the set $\{a_1, b_1, a_2, b_2, \ldots \}$ that have distributions which are separately invariant under 
finite permutations of the indices of the $a'$s and those of the $b'$s.  We establish a further bijective 
correspondence between the set of such random total orders and the set of pairs $(\mu,\nu)$ of 
diffuse probability measures on $[0,1]$ such that $\frac{1}{2}(\mu+\nu)$ is Lebesgue measure: the restriction of
the random total order to $\{a_1, b_1, \ldots, a_n, b_n\}$ is obtained by taking $X_1, \ldots, X_n$ (resp. $Y_1, \ldots, Y_n$) 
i.i.d. with common distribution $\mu$ (resp. $\nu$), letting
$(Z_1, \ldots, Z_{2n})$ be $\{X_1, Y_1, \ldots, X_n, Y_n\}$ in increasing order, and declaring that the $k^{\mathrm{th}}$ 
smallest element in the restricted total order is $a_i$ (resp. $b_j$) if $Z_k = X_i$
(resp. $Z_k = Y_j$).
\end{abstract}

\subjclass[2010]{Primary: 05A05, 60J10, 68R15.} 

\keywords{harmonic function, exchangeability, bridge, shuffle, subword counting, binomial coefficient, Plackett-Luce model, vase model}


\maketitle

\section{Introduction}

There is a very simple way of producing a 
uniformly distributed random permutation of a set with
$n$ objects, say $[n] := \{1,\ldots,n\}$:
we take the elements of $[n]$ in order and 
lay them down successively so that the $k^{\mathrm{th}}$ element
goes into a uniformly chosen one of the $k$ ``slots'' defined
by the $k-1$ elements that have already been laid down 
(the slot before the first element, the slot after the last element,
or one of the $k-2$ slots between elements).  This sequential
algorithm has the attractive feature that when the first
$k$ elements have been laid down they are in uniform random
order; that is, the algorithm builds uniformly distributed random permutations
of $[1], [2], \ldots, [n]$ in a sequential manner.

Suppose that we enumerate a standard deck of cards with the elements of the set
$[52]$.  If the deck is in some order, then the colors of the successive cards
({\bf R}ed or {\bf B}lack) define a word of length $52$ from the 
two-letter alphabet $\{R,B\}$ in which $26$ letters are $R$ 
and $26$ letters are $B$ (recall that a word of length
$k$ from a finite alphabet $\mathcal{A}$ is just
an element of the Cartesian product $\mathcal{A}^k$, although it
is usual to write the word $(a_1, \ldots, a_k)$ more succinctly
as $a_1 \cdots a_k$).  Moreover, if the order of the deck is 
random and uniformly distributed, then the resulting word is uniformly
distributed over the set of $\frac{52!}{26! 26!}$ such words.  


Unfortunately, our sequential randomization algorithm doesn't have the feature
that at the $(2k)^{\mathrm th}$ step for $1 \le k \le 26$ we have a random
word from the alphabet $\{R,B\}$ that is uniformly distributed over
the set of $\binom{2k}{k}$ words in which $k$ letters are $R$ and $k$ letters
are $B$.  

However, there is a simple way of modifying our algorithm to
produce the latter type of random words sequentially.  
We begin at step $0$ with the
empty word.  Suppose that we have completed
$k$ steps and a word of length $2k$ has
been produced.  The first sub-step of step $k+1$
inserts the letter $R$
uniformly at random into one of the $2k+1$ slots defined by
these $2k$ letters to produce a word of length
$2k+1$.  The second sub-step inserts the letter
$B$ uniformly at random into one of the $2k+2$ slots defined
by these $2k+1$ letters to produce a word of length
$2k+2$ and thereby complete step $k+1$.  It is not difficult
to see that, despite the apparent dependence of this
procedure on the ordering of the letters $R$ and $B$,
this procedure does indeed achieve what it is claimed to achieve.

From now on we will replace the alphabet $\{R,B\}$ by the alphabet
$\{a,b\}$ and write $(U_n)_{n \in \bN_0}$ for the Markov
chain that arises from our random insertion procedure.  Thus,
$U_n \in \bW_n$, where $\bW_n$ is the set words drawn from the
alphabet $\{a,b\}$ that consist of $n$ letters $a$ and $n$ letters $b$.
Set $\bW := \bigsqcup_{n \in \bN_0} \bW_n$ and put $N(w) = n$ for
$w \in \bW_n$, $n \in \bN_0$.

 We investigate the infinite bridges (equivalently, the Doob $h$-transforms) 
 for the Markov chain $(U_n)_{n \in \bN_0}$; that is, the Markov chains that
 have the same backwards-in-time transition dynamics as $(U_n)_{n \in \bN_0}$.
 We thereby identify the Doob-Martin compactification
 of the state space $\bW$ of the Markov chain.  
This enables us to characterize the nonnegative harmonic functions for the
 Markov chain and hence delineate all the ways that the Markov chain can be conditioned to 
 ``behave at infinity''.  
 
 More specifically, we show that a $\bW$-valued
 Markov chain is an infinite bridge for the Markov chain $(U_n)_{n \in \bN_0}$ if and only if the
 backwards dynamics are given by removing one letter $a$ and one letter $b$ uniformly at random from the current word.  
We can enrich the state space of the Markov chain $(U_n)_{n \in \bN_0}$ by
 replacing $\bW_n$ with the set $\tilde \bW_n$ that consists of words made up from the letters
 $a_1, b_1, \ldots, a_n, b_n$ written down in some order (each letter appearing once); that is,
a word such as $aababb$ will be associated with a word such as $a_3 a_1 b_2 a_2 b_1 b_3$ -- a given
$w \in \bW_n$ has $(n!)^2$ associated words in $\tilde \bW_n$.
 We can then enhance an infinite bridge $(U_n^\infty)_{n \in \bN_0}$
 to produce a Markov chain $(\tilde U_n^\infty)_{n \in \bN_0}$ with values
 in $\tilde \bW := \bigsqcup_{n \in \bN_0} \tilde \bW_n$ such that given $U_n^\infty = u$
 the value of $\tilde U_n^\infty$ is uniformly distributed over all ways
 of ``subscripting'' the letters in $u$; for example, if
 $U_2^\infty = a b b a$, then $\tilde U_2^\infty$ is uniformly distributed over
 the four words $a_1 b_1 b_2 a_2$, $a_2 b_1 b_2 a_1$, $a_1 b_2 b_1 a_2$, 
 $a_2 b_2 b_1 a_1$.  Moreover, in going from $\tilde U_n^\infty$
 to $\tilde U_{n-1}^\infty$ the letters $a_n$ and $b_n$ are deleted.
 We may view $\tilde U_n^\infty$ as a random total (that is, linear)
 order on the set $\{a_1, b_1, \ldots, a_n, b_n\}$.  As $n$ varies, these
 orders are consistent in the sense that the order $\tilde U_n^\infty$
 induces on $\{a_1, b_1, \ldots, a_{n-1}, b_{n-1}\}$ is just the order
 given by $\tilde U_{n-1}^\infty$.  Consequently, there is a
 total order on $\{a_1, b_1, a_2, b_2, \ldots\}$ that induces each of
 the orders given by the $\tilde U_n^\infty$.  This total order is
 exchangeable in the sense that finite permutations of the subscripts of the
 $a$'s and $b$'s separately leave its distribution unchanged.  The
 infinite bridge $(U_n^\infty)_{n \in \bN_0}$ is extremal (that is, not
 a mixture of infinite bridges or, equivalently,
 has an almost surely trivial tail $\sigma$-field) if and only if the exchangeable random total order
 on  $\{a_1, b_1, a_2, b_2, \ldots\}$ is ergodic in the sense that
 if an event is unchanged by finite permutations of the subscripts of the
 $a$'s and $b$'s separately, then it has probability zero or one.
By general Doob--Martin theory, extremal bridges correspond to extremal
elements of the Doob--Martin boundary and, in general, some elements
of the Doob--Martin boundary may not be extremal.  We show that the latter phenomenon
does not occur in our setting -- all Doob--Martin boundary points are extremal.
 
 We demonstrate that there is a bijective correspondence 
 between ergodic exchangeable random total orders
 on  $\{a_1, b_1, a_2, b_2, \ldots\}$ and
 pairs $(\mu, \nu)$ of diffuse probability measures on the unit interval $[0,1]$ 
 such that $\frac{\mu + \nu}{2} = \lambda$, where $\lambda$ is Lebesgue measure on $[0,1]$:
let $V_1, V_2, \ldots$ be i.i.d. with distribution $\mu$ and $W_1, W_2, \ldots$
be independent and i.i.d. with distribution $\nu$, then, writing $\prec$
for the total order, we have $a_i \prec a_j$ (resp. $a_i \prec b_j$,
$b_i \prec a_j$, $b_i \prec b_j$) if $V_i < V_j$ 
(resp. $V_i < W_j$, $W_i < V_j$, $W_i < W_j$).  
Another way of describing this construction is the following. We only need to describe the
restriction of the random total order to $\{a_1, b_1, \ldots, a_n, b_n\}$ for each $n \in \bN_0$.  Let 
$(Z_1, \ldots, Z_{2n})$ be $\{V_1, W_1, \ldots, V_n, W_n\}$ in increasing order and declare that the $k^{\mathrm{th}}$ 
smallest element of $\{a_1, b_1, \ldots, a_n, b_n\}$  in the restricted total order is $a_i$ (resp. $b_j$) if $Z_k = X_i$
(resp. $Z_k = Y_j$).

We remark that, due to the relationship $\frac{\mu + \nu}{2} = \lambda$, the probability measure
$\nu$ is uniquely determined by the probability measure $\mu$ and {\em vice versa} and hence we could
have said that the ergodic exchangeable random total orders are in bijective correspondence with the
probability measures $\mu$ on $[0,1]$ that satisfy $\mu \le 2 \lambda$.  However, we find
the more symmetric description to be preferable.
 
 
In terms of the Doob--Martin topology, we show that a sequence $(y_k)_{k \in \bN}$ 
with $y_k \in \bW_{N(y_k)}$ and $N(y_k) \to \infty$ as $k \to \infty$ converges
to the point in the Doob--Martin boundary corresponding to the pair of measures
$(\mu,\nu)$ if and only if for each $m \in \bN$ the the sequence of
random words obtained by
selecting $m$ letters $a$ and $m$ letters $b$ uniformly at random from
$y_k$ and maintaining their relative order converges in distribution as
$k \to \infty$ to the random word that is obtained by writing
$V_1, \ldots, V_m, W_1, \ldots, W_m$ in increasing order to make
a list $(Z_1, \ldots, Z_{2m})$ as above and then putting
a letter $a$ (resp. $b$) in position $\ell$ of the word when $Z_\ell \in \{V_1, \ldots, V_m\}$
(resp. $Z_\ell \in \{W_1, \ldots, W_m\}$).  Moreover, the convergence of $(y_k)_{k \in \bN}$ 
to $y$ is equivalent to the weak convergence of $\mu_k$ to $\mu$ and $\nu_k$ to $\nu$,
where $\mu_k$ (resp. $\nu_k$) is the probability measure that places mass $\frac{1}{N(y_k)}$
at the point $\frac{\ell}{2 N(y_k)}$ $1 \le \ell \le 2N(y_k)$, if the $\ell^{\mathrm{th}}$ letter of the word $y_k$ is
the letter $a$ (resp. $b$).

\section{Background on the Doob--Martin compactification}
\label{DM_background}

 The primary reference on the Doob--Martin 
compactification theory for discrete time Markov chains 
is \cite{MR0107098}, but useful reviews
may be found in 
\cite[Chapter 10]{MR0407981},
\cite[Chapter 7]{MR0415773}, 
\cite{MR1463727},
\cite[Chapter IV]{MR1743100},
\cite[Chapter III]{MR1796539}.
We restrict the following sketch to the setting that is
of interest to us.

Suppose that  $(X_n)_{n \in \bN_0}$ is a discrete time Markov chain
with countable state space $E$ and transition matrix $P$.  
Suppose in addition that $E$ can be partitioned
as $E = \bigsqcup_{n \in \bN_0} E_n$, where
$E_0 = \{e\}$ for some distinguished state $e$,
each set $E_n$, $n \in \bN_0$ is finite, and the transition matrix
$P$ is such that $P(k,\ell) = 0$ unless $k \in E_n$ and 
$\ell \in E_{n+1}$ for some $n \in \bN_0$.
Define the {\em Green kernel} or {\em potential kernel} $G$ of
$P$ by 
\[G(i,j) := \sum_{n=0}^\infty P^n(i,j) 
= \bP^i\{X_n = j \; \text{for some $n \in \bN_0$}\}
=:\bP^i\{\text{$X$  hits $j$}\},
\] 
$i,j \in E$,
and assume that $G(e,j) > 0$ for all $j \in E$, so
that any state can be reached with positive probability starting from $e$. 

The {\em Doob--Martin kernel with reference state $e$} is
\[ 
K(i,j) := \frac{G(i,j)}{G(e,j)} = 
\frac{\bP^i\{\text{$X$  hits $j$}\}}{\bP^e\{\text{$X$  hits $j$}\}}.
\] 
If $j,k \in E$ with $j \ne k$,
then $K(\cdot,j) \ne K(\cdot,k)$ and so $E$ can be identified
with the collection of functions $(K(\cdot,j))_{j \in E}$.  
Note that
\[
0 \le K(i,j) \le \frac{1}{\bP^e\{\text{$X$  hits $i$}\}},
\]
and so the set of functions $(K(\cdot,j))_{j \in E}$
is a pre-compact subset of $\bR_+^E$. Its closure $\bar E$
is the {\em Doob--Martin compactification} of $E$. 
The set $\partial E := \bar E \setminus E$
is the {\em Doob--Martin boundary} of $E$.

By definition, a sequence $(j_n)_{n \in \bN}$ in $E$ converges
to a point in $\bar E$
if and only if the sequence of real numbers
$(K(i,j_n))_{n\in\bN}$ converges for all $i\in E$. Each function
$K(i,\cdot)$ extends continuously to $\bar E$. The
resulting function $K: E \times \bar E \rightarrow \bR$ is
the {\em extended Martin kernel}.  For $y \in \partial E$ the
nonnegative function $K(\cdot,y)$ is harmonic and 
any nonnegative harmonic function can be represented as
$\int K(\cdot, y) \, \mu(dy)$ 
for a suitable finite measure $\mu$ on $\partial E$.

If $Z$ is a $\bP^e$-a.s. bounded random variable 
that is measurable with respect to the tail $\sigma$-field of
$(X_n)_{n \in \bN_0}$, then
$\bE^e[Z \, | \, X_0, \ldots, X_n] = h(X_n)$ for some bounded harmonic function
$h$ and,  by the martingale convergence theorem,
$\lim_{n \to \infty} h(X_n) = Z\,$ $\bP^e$-a.s.  
Conversely, if $h$ is a bounded harmonic function, then
$\lim_{n \to \infty} h(X_n)$ exists $\bP^e$-a.s. 
and the limit random variable is $\bP^e$-a.s. equal to a random variable
that is measurable with respect to the tail $\sigma$-field of 
$(X_n)_{n \in \bN_0}$. 

The limit 
$X_\infty:=\lim_{n \rightarrow \infty} X_n$ exists $\bP^e$-almost 
surely in the topology of $\bar E$ and the limit belongs to $\partial E$ 
$\bP^e$-almost surely.
The tail $\sigma$-field of 
$(X_n)_{n \in \bN_0}$ coincides $\bP^e$-almost surely
with the $\sigma$-field generated by $X_\infty$.

Each $j \in E = \bigsqcup_{n \in \bN_0} E_n$
belongs to a unique $E_n$ whose index $n$ we denote by $N(j)$.  If the Markov chain
starts in state $e$, then $N(j)$ is the only time that there is
positive probability the Markov chain will be in state $j$. Write 
$(X_0^j, \ldots, X_{N(j)}^j)$ for the {\em bridge} obtained by
starting the Markov chain in state $e$ and conditioning it to be in
state $j$ at time $N(j)$.  This process 
is a Markov chain with transition probabilities
\[
\begin{split}
\bP\{X_{n+1}^j = i'' \, | \, X_n^j = i'\}
& =
\frac{
\bP^e\{X_n = i', \, X_{n+1} = i'', \, X_{N(j)} = j\}
}
{
\bP^e\{X_n = i', \, X_{N(j)} = j\}
} \\
& =
\frac{
\bP^e\{ \text{$X$  hits $i'$}\} P(i',i'') \bP^{i''}\{ \text{$X$  hits $j$}\}
}
{
\bP^e\{ \text{$X$  hits $i'$}\} \bP^{i'}\{ \text{$X$  hits $j$}\}
} \\
& =
\frac{
P(i',i'') \bP^{i''}\{ \text{$X$  hits $j$}\} / \bP^e\{\text{$X$  hits $j$}\}
}
{
\bP^{i'}\{ \text{$X$  hits $j$}\} / \bP^e\{\text{$X$  hits $j$}\}
} \\
& =
K(i',j)^{-1} P(i',i'') K(i'',j). \\
\end{split}
\]
The backward transition probabilities of $(X_0^j, \ldots, X_{N(j)}^j)$
are given by
\[
\begin{split}
\bP\{X_n^j = i' \, | \, X_{n+1}^j = i''\}
& =
\frac{
\bP^e\{ \text{$X$  hits $i'$}\} P(i',i'') \bP^{i''}\{ \text{$X$  hits $j$}\}
}
{
\bP^e\{ \text{$X$  hits $i''$}\} \bP^{i''}\{ \text{$X$  hits $j$}\}
} \\
& =
\frac{
\bP^e\{ \text{$X$  hits $i'$}\} P(i',i'')
}
{
\bP^e\{ \text{$X$  hits $i''$}\}
}, \\
\end{split}
\]
so that all bridges have the same backward transition probabilities.
An {\em infinite bridge} for $(X_n)_{n \in \bN_0}$ 
is a Markov chain $(X_n^\infty)_{n \in \bN_0}$ with these backward transition
probabilities.  If $(X_n^\infty)_{n \in \bN_0}$ is an infinite bridge, then
\[
\begin{split}
\bP\{X_{n+1}^\infty = i'' \, | \, X_n^\infty = i'\}
& =
\frac{
\bP^e\{ \text{$X^\infty$  hits $i''$}\}
\bP\{X_n^\infty = i' \, | \, X_{n+1}^\infty = i''\}
}
{
\bP^e\{ \text{$X^\infty$  hits $i'$}\}
} \\
& =
h(i')^{-1} P(i',i'') h(i''), \\
\end{split}
\]
where
\[
h(i) = 
\frac{
\bP^e\{  \text{$X^\infty$  hits $i$}\}
}
{
\bP^e\{  \text{$X$  hits $i$}\}
}.
\]
Thus an infinite bridge is a Doob $h$-transform of $(X_n)_{n \in \bN_0}$
with a particular harmonic function $h$.  Conversely, any
Doob $h$-transform is an infinite bridge.

Suppose now that $(j_k)_{k \in \bN}$ is a sequence
of elements of the state space $E$ such that $N(j_k) \to \infty$
as $k \to \infty$.  As observed in \cite{MR0426176}, such a sequence
$(j_k)_{k \in \bN}$ converges in the Doob--Martin topology if and
only if finite initial segments of the corresponding bridges
converge in distribution.  Moreover, two
sequences of states converge to the same limit if and only if
the limiting distributions of finite
initial segments are the same.  For a sequence
$(j_k)_{k \in \bN}$ that converges to a point in the Doob--Martin boundary,
the limiting distributions of the 
initial segments define the distribution of an $E$-valued
Markov chain $(X_n^{(h)})_{n \in \bN_0}$ with
transition probabilities $P^{(h)}$ given by 
\[
P^{(h)}(i,j) := h(i)^{-1} P(i,j) h(j), \quad i,j \in E^{(h)},
\]
where $h(i) = \lim_{k \to \infty} K(i,j_k)$ 
and 
\[
\begin{split}
E^{(h)} 
& := \{i \in E : h(i) > 0\} \\
& = \{i \in E: \lim_{k \to \infty} \bP\{X_{N(i)} = i \, | \, X_{N(j_k)} = j_k\} > 0\}. \\
\end{split}
\]
This Markov chain $(X_n^{(h)})_{n \in \bN_0}$ is an infinite bridge.  A necessary
condition for an infinite bridge to be extremal (that is, having a distribution 
that is not a nontrivial mixture of infinite bridge distributions) 
is that it is of this form.

\section{Transition probabilities and the Doob--Martin kernel for the growing word chain}

\begin{defn} For $n \in \bN_0$ write $\bW_n$ for the set of words
from the alphabet $\{a,b\}$ that have $n$ letters $a$
and $n$ letters $b$ and put $\bW := \bigsqcup_{n \in \bN_0} \bW_n$.  
\end{defn}

By definition, the Markov chain $(U_n)_{n \in \bN_0}$ has state space
$\bW$ and one-step transition probabilities
\[
\bP\{U_{m+1} = w \, \vert \, U_m = v\}
=
\frac{M(v,w)}{(2m+2)(2m+1)} 
\]
for $v \in \bW_n$ and $w \in \bW_{n+1}$,
where $M(v,w)$ is the number of ways to write $w = v_1 x v_2 y v_3$ 
in such a way that
$\{x,y\} = \{a,b\}$ and $v_1, v_2, v_3$ are (possibly empty) words such that
$v = v_1 v_2 v_3$.  That is, $M(v,w)$ is the number of times that  
$v$  appears inside  $w$  as a {\em sub-word}. (We recall that, in general, a word
$c_1 \cdots c_p$ is a sub-word of a word $d_1 \cdots d_q$ if
there is a map $f:[p] \to [q]$ such that $f(i) < f(j)$ for $1 \le i < j \le p$ 
and $d_{f(k)} = c_k$ for $1 \le k \le p$.)  

In order to write down multi-step
transition probabilities for the Markov chain $(U_n)_{n \in \bN_0}$, it is convenient to introduce
the following standard notation (see, for example, \cite{MR1475463}).

\begin{defn}
Given two words  $w$   and   $v$ drawn from some finite alphabet,  
write $\binom{w}{v}$ for the number of times that $v$  appears  
as a sub-word of $w$. 
\end{defn}

\begin{ex}
For example, $\binom{abbaba}{bba}=4$ 
because $bba$ 
appears inside $abbaba$ as a sub-word four times:

\vspace{3mm}
a{\Large \textbf{bba}}ba 
\hspace{3mm} a\textbf{\Large bb}ab\textbf{\Large a} 
\hspace{3mm} a\textbf{\Large b}ba\textbf{\Large ba} 
\hspace{3mm} ab\textbf{\Large b}a\textbf{\Large ba}.
\vspace{3mm}
 %
\end{ex}

\begin{rem}
Note that if our alphabet has only one letter, then $\binom{w}{v}$ is just
the usual binomial coefficient $\binom{|w|}{|v|}$, where we use the notation
$|u|$ for the length of the word $u$.
\end{rem}

For a general finite alphabet $\cA$, $\binom{w}{v}$ is uniquely determined by the 
following three properties, where we write $\cA^*$ for the set
of finite words with letters drawn from the alphabet $\cA$
(see \cite[Proposition 6.3.3]{MR1475463}):
\begin{itemize}
\item
$\binom{w}{\emptyset} = 1$ for all $w \in \cA^*$, where $\emptyset$ is the empty word,
\item
$\binom{w}{v} = 0$ for all $v,w \in \cA^*$ with $|w| < |v|$, 
\item
$\binom{w y}{v x} = \binom{w}{v x} + \delta_{x,y} \binom{w}{v}$, for all $v,w \in \cA^*$
and $x,y \in \cA$, where $\delta$ is the usual Kronecker delta.
\end{itemize}

The counting involved in determining $\binom{w}{v}$ for general $v,w \in \cA^*$ is handled by the
following result from \cite{Claesson_15}.  Define an infinite matrix $\cP$ with entries
indexed by $\cA^*$ by setting the $(v,w)$ entry to be $\binom{w}{v}$.  
If the row and column indices are ordered
so that they are nondecreasing in word length, 
then $\cP$ is an upper triangular matrix with $1$ in every position
on the diagonal.  
Define another infinite matrix $\cH$ indexed by $\cA^*$ by setting the $(v,w)$ entry to be
$\binom{w}{v}$ if $|w|=|v|+1$ and $0$ otherwise.  With the same ordering of the indices as for $\cP$,
the matrix $\cH$ is upper triangular with $0$ in every position on the diagonal.  
The matrix exponential $\exp(\cH)$ is well-defined and is equal to $\cP$.

Using the above notation, we can express the transition probabilities of 
$(U_n)_{n \in \bN_0}$ as follows.

\begin{lem}\label{2tran}
For words $v \in \bW_m$ and $w \in \bW_{m+n}$
	\begin{equation*}
	\PP\{U_{m+n}= w \, \vert \, U_m= v\} 
	= 
	\binom{w}{v} \frac{n!n!}{(2m+1) (2m+2) \cdots (2(m+n))}.
	\end{equation*}
\end{lem}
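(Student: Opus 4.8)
The plan is to compute the $n$-step transition probability by summing over paths and to reduce the whole statement to a single combinatorial identity for sub-word counts. By the displayed one-step transition formula --- in which $M(u',u'')$ is, by definition, the sub-word count $\binom{u''}{u'}$ --- the chain moves from $u' \in \bW_\ell$ to $u'' \in \bW_{\ell+1}$ with probability $\binom{u''}{u'}/\big((2\ell+1)(2\ell+2)\big)$. Hence, by the Markov property,
\[
\PP\{U_{m+n}=w \mid U_m=v\}
= \sum \prod_{i=0}^{n-1} \frac{\binom{u_{i+1}}{u_i}}{(2(m+i)+1)(2(m+i)+2)},
\]
the sum running over all chains $v = u_0, u_1, \ldots, u_n = w$ with $u_i \in \bW_{m+i}$ for $0 \le i \le n$ (the only possibilities, since the chain passes through $\bW_m, \bW_{m+1}, \ldots$ in turn). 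Along every such chain the denominators multiply to $(2m+1)(2m+2)\cdots(2(m+n))$, so the lemma reduces to proving that $\sum \prod_{i=0}^{n-1} \binom{u_{i+1}}{u_i} = (n!)^2 \binom{w}{v}$, the sum being over the same chains.

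I would prove this identity bijectively. For a fixed chain, $\prod_{i=0}^{n-1}\binom{u_{i+1}}{u_i}$ counts the ways to choose, for each $i$, an occurrence of $u_i$ as a sub-word of $u_{i+1}$; composing these occurrences inside $w = u_n$, such data becomes a flag of position sets $T_0 \subseteq T_1 \subseteq \cdots \subseteq T_n = \{1,\ldots,2(m+n)\}$ with $|T_i| = 2(m+i)$, with $w|_{T_0} = v$, and with $w|_{T_i} \in \bW_{m+i}$ for every $i$ (here $w|_T$ is the sub-word of $w$ read off the positions in $T$). Conversely a flag with these properties recovers the chain, via $u_i = w|_{T_i}$, and the occurrences, by viewing $T_i$ inside $T_{i+1}$; so the left-hand side of the identity equals the number of such flags. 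There are exactly $\binom{w}{v}$ choices of $T_0$; once one is fixed, the $2n$ positions of $w$ lying outside $T_0$ consist of $n$ positions carrying the letter $a$ and $n$ carrying $b$, because $w$ has $m+n$ of each letter while $w|_{T_0} = v$ has $m$ of each. The conditions $w|_{T_i} \in \bW_{m+i}$ now force each two-element set $T_{i+1}\setminus T_i$ to consist of one $a$-position and one $b$-position, so completing the flag is the same as choosing an ordering of the $n$ $a$-positions and, independently, an ordering of the $n$ $b$-positions, with $T_{i+1}\setminus T_i$ taken to be the $(i+1)$-st of each. This gives $(n!)^2$ completions, and the identity follows.

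The manipulations with the denominators and the layered structure of the chain are routine. The one step that genuinely requires care is the bijection in the middle: identifying a pair (chain, choice of sub-word occurrences) with a flag of position sets, and checking that the letter-content conditions $u_i \in \bW_{m+i}$ are precisely equivalent to ``each step of the flag inserts one $a$ and one $b$.'' If one prefers to avoid the explicit flag bijection, the same facts can be organized as an induction on $n$, in which everything collapses to the identity $\sum_{u \in \bW_{m+n}} \binom{u}{v}\binom{w}{u} = (n+1)^2 \binom{w}{v}$ for $v \in \bW_m$ and $w \in \bW_{m+n+1}$ --- proved by the same position-counting argument applied to a single insertion step --- together with the elementary bookkeeping $(n!)^2 (n+1)^2 = ((n+1)!)^2$.
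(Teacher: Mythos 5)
Your proof is correct. The route differs mildly from the paper's: the paper argues by induction on $n$, reducing everything to the single-step identity $\sum_{v \in \bW_{m+1}} \binom{v}{u}\binom{w}{v} = (n+1)^2\binom{w}{u}$, which it justifies by exactly the counting idea you use (an embedding of the smaller word plus a choice of which leftover $a$ and $b$ to adjoin). You instead sum over all paths at once and prove the full $n$-step numerator identity $\sum \prod_{i}\binom{u_{i+1}}{u_i} = (n!)^2\binom{w}{v}$ directly, via the flag-of-position-sets bijection; your closing remark about collapsing to $\sum_{u \in \bW_{m+n}} \binom{u}{v}\binom{w}{u} = (n+1)^2\binom{w}{v}$ is precisely the paper's induction, merely peeling off the last step rather than the first. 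The trade-off is minor: the direct bijection proves a slightly stronger combinatorial statement (it enumerates all admissible chains with their embeddings, which also makes transparent why the factor is $(n!)^2$), at the cost of the flag bookkeeping, while the paper's induction keeps the verification to a single insertion step. Both hinge on the same observation that, once an occurrence of the small word in $w$ is fixed, the remaining letters split into $n$ unused $a$'s and $n$ unused $b$'s to be incorporated one pair at a time, and your identification of $M(v,w)$ with $\binom{w}{v}$ is the same identification the paper makes when defining the one-step probabilities.
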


\begin{proof}
We proceed by induction.  The result is certainly
true when $n=1$.  Supposing it is true for some value of $n$, in order to show
it is true for $n+1$, we need to show that for $u \in \bW_m$ and $w \in \bW_{m+n+1}$
we have
\[
\begin{split}
& \sum_{v \in \bW_{m+1}}
\binom{v}{u} \frac{1}{(2m+1) (2m+2)}
\binom{w}{v} \frac{n!n!}{(2m+3) (2m+4) \cdots (2(m+n+1))} \\
& \quad =
\binom{w}{u} \frac{(n+1)!(n+1)!}{(2m+1) (2m+2) \cdots (2(m+n+1))}, \\
\end{split}
\]
or, equivalently, that
\[
\sum_{v \in \bW_{m+1}}
\binom{v}{u}
\binom{w}{v}
=
\binom{w}{u} (n+1)^2.
\]
This, however, is clear.  The lefthand side counts the number of words $v \in
\bW_{m+1}$ such that $u$ is subword of $v$ and $v$ is a subword of $w$.  Any such
$v$ and its embedding in $w$
arises by taking an embedding of $u$ in $w$ and then specifying 
which of the remaining $n+1$ letters $a$ in $w$
and 
which of the remaining $n+1$ letters $b$ in $w$ 
are used to build the word with its particular embedding, and
this is what the righthand side counts.   
\end{proof}

\begin{cor}
The Doob--Martin kernel of $(U_n)_{n \in \bN_0}$ with
distinguished state the empty word is, for $v \in \bW_m$ and $w \in \bW_{m+n}$,
\[
K(v,w) = \binom{w}{v} \frac{\binom{2m}{m}}{{\binom{m+n}{m}}^2}.
\]
\end{cor}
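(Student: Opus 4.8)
The plan is to deduce the formula directly from Lemma~\ref{2tran} by exploiting the graded structure of the state space described in Section~\ref{DM_background}. Since a word in $\bW_k$ can only be visited at time $k$ (so $N(w)=k$ for $w\in\bW_k$) and $P(v,w)=0$ unless $v\in\bW_k$, $w\in\bW_{k+1}$ for some $k$, the sum $\sum_{k\ge 0}P^k(v,w)$ defining $G(v,w)$ has at most one nonzero term. Hence, for $v\in\bW_m$ and $w\in\bW_{m+n}$,
\[
G(v,w)=\bP^v\{X\text{ hits }w\}=\bP\{U_{m+n}=w\mid U_m=v\}=\binom{w}{v}\,\frac{n!\,n!}{(2m+1)(2m+2)\cdots(2(m+n))},
\]
the last equality being Lemma~\ref{2tran}.

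Next I would specialize to the reference state $e=\emptyset\in\bW_0$. Applying Lemma~\ref{2tran} with the role of $m$ played by $0$ and the role of $n$ played by $m+n$, and using $\binom{w}{\emptyset}=1$, gives
\[
G(\emptyset,w)=\frac{((m+n)!)^2}{(2(m+n))!}.
\]
By definition $K(v,w)=G(v,w)/G(\emptyset,w)$, so it remains only to simplify the resulting product of factorials.

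Finally I would carry out the bookkeeping: writing $(2m+1)(2m+2)\cdots(2(m+n))=(2(m+n))!/(2m)!$, the ratio $G(v,w)/G(\emptyset,w)$ collapses to
\[
K(v,w)=\binom{w}{v}\,\frac{(n!)^2\,(2m)!}{((m+n)!)^2},
\]
and substituting $\binom{2m}{m}=(2m)!/(m!)^2$ and $\binom{m+n}{m}^2=((m+n)!)^2/((m!)^2(n!)^2)$ identifies the right-hand side with $\binom{w}{v}\,\binom{2m}{m}\big/\binom{m+n}{m}^2$, which is the claimed expression.

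I do not expect any genuine obstacle here. The only steps requiring a moment of care are the identification of the Green kernel $G(v,w)$ with a single multi-step transition probability (immediate from the grading, but worth stating explicitly) and the correct treatment of the degenerate level-zero case when computing $G(\emptyset,w)$ for the reference state; everything else is elementary manipulation of factorials and binomial coefficients.
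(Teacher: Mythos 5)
Your argument is correct and is essentially the paper's own proof: both identify $K(v,w)$ with the ratio $\PP\{U_{m+n}=w\mid U_m=v\}/\PP\{U_{m+n}=w\mid U_0=\emptyset\}$ (your Green-kernel observation is exactly why this ratio is the Martin kernel in this graded chain) and then apply Lemma~\ref{2tran} twice and simplify factorials. The computations check out, so there is nothing to add.
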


\begin{proof}
We have
\[
\begin{split}
& K(v,w) \\
& \quad = 
\frac{
\PP\{U_{m+n}= w \, \vert \, U_m= v\}
}
{
\PP\{U_{m+n}= w \, \vert \, U_0= \emptyset\}
} \\
& \quad =
\frac
{
\binom{w}{v} \frac{n!n!}{(2m+1) (2m+2) \cdots (2(m+n))}
}
{
\binom{w}{\emptyset} \frac{(m+n)!(m+n)!}{(2(m+n))!}
} \\
& \quad =
\binom{w}{v} 
\frac{n! n! (2(m+n))!}{(m+n)!(m+n)! (2m+1) (2m+2) \cdots (2(m+n))} \\
& \quad =
\binom{w}{v} \frac{\binom{2m}{m}}{\binom{m+n}{n} \binom{m+n}{n}}. \\
\end{split}
\]
\end{proof}

\begin{rem}
\label{identification_D_M_topology}
Up to the factor $\binom{2m}{m}$, the Doob--Martin kernel $K(v,w)$ is the probability that
if we select $m$ of the letters $a$ and $m$ of the letters $b$
uniformly at random from $w$ and list these letters in the same relative
order that they appear in $w$, then the resulting word is $v$.  Therefore,
a sequence  $(w_k)_{k \in \bN}$ in $\bW$ with $N(w_k) \to \infty$ as
$k \to \infty$ converges in the Doob--Martin topology if and only if
for every $m \in \bN$ the sequence of random words in $\bW_m$ obtained by selecting
$m$ letters $a$ and $m$ letters $b$ from $w_k$ (and maintaining their
relative order) converges in distribution as $k \to \infty$.
\end{rem}

\begin{defn}
For $w \in \bW_k$, $k \in \bN_0$, let  
$(U_0^{w}, \ldots, U_k^{w})$ be the bridge from the
empty word to $w$.
\end{defn}

\begin{thm} \label{back}
The backward transition dynamics for all bridges from the
empty word are the
same and consist of removing at each step
one  letter $a$  and one letter $b$  uniformly at random.
\end{thm}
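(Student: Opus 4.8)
The plan is to read the answer off directly from the universal backward formula recalled in Section~\ref{DM_background}. There it is shown that for any target word $w' \in \bW_k$ the bridge $(U_0^{w'},\dots,U_k^{w'})$ has backward transition probabilities
\[
\bP\{U_n^{w'} = i' \mid U_{n+1}^{w'} = i''\} = \frac{\bP^\emptyset\{U\text{ hits }i'\}\,P(i',i'')}{\bP^\emptyset\{U\text{ hits }i''\}},
\]
and that the right-hand side does not depend on $w'$; this already gives the assertion that all bridges (and, a fortiori, all infinite bridges) share the same backward dynamics. So the only real task is to evaluate this ratio for the word chain when $i' = v \in \bW_m$ and $i'' = w \in \bW_{m+1}$, and then to interpret the outcome.

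First I would assemble the ingredients. Taking $m=0$ in Lemma~\ref{2tran} gives $\bP^\emptyset\{U\text{ hits }v\} = \bP\{U_m = v \mid U_0 = \emptyset\} = \binom{v}{\emptyset}\frac{m!\,m!}{(2m)!} = \binom{2m}{m}^{-1}$ for $v \in \bW_m$, and similarly $\bP^\emptyset\{U\text{ hits }w\} = \binom{2m+2}{m+1}^{-1}$ for $w \in \bW_{m+1}$; moreover the one-step forward probability is $P(v,w) = \binom{w}{v}/\big((2m+1)(2m+2)\big)$. Substituting these into the formula above and cancelling factorials (using $(2m)!(2m+1)(2m+2) = (2m+2)!$) collapses the ratio to
\[
\frac{\bP^\emptyset\{U\text{ hits }v\}\,P(v,w)}{\bP^\emptyset\{U\text{ hits }w\}} = \frac{\binom{w}{v}}{(m+1)^2},
\]
so the common backward step from a word $w \in \bW_{m+1}$ lands on $v \in \bW_m$ with this probability.

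The last step is to recognise $\binom{w}{v}/(m+1)^2$ as the law of the word obtained from $w$ by deleting a uniformly chosen letter $a$ and a uniformly chosen letter $b$. Each sub-word embedding of $v$ into $w$ retains exactly $m$ of the $m+1$ letters $a$ and $m$ of the $m+1$ letters $b$ of $w$ --- equivalently, it singles out one letter $a$ and one letter $b$ of $w$ to be deleted, and deleting them leaves $v$ in its induced order. Conversely, each of the $(m+1)^2$ choices of one $a$ and one $b$ of $w$ to delete produces some $v \in \bW_m$ together with a distinguished embedding. Hence $\binom{w}{v}$ counts exactly the deletion pairs that produce $v$, so that $\sum_{v \in \bW_m}\binom{w}{v} = (m+1)^2$, and $\binom{w}{v}/(m+1)^2$ is precisely the probability that deleting an independently uniform random letter $a$ and a uniform random letter $b$ from $w$ yields $v$. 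There is no genuine obstacle here: the proof is a one-line computation followed by a combinatorial reading of the binomial symbol. The only place to be mildly careful is the bijection in this last step between sub-word embeddings of $v$ in $w$ and pairs of deleted letters, and that is immediate from the definition of a sub-word (alternatively, one can extract $\sum_{v \in \bW_m}\binom{w}{v} = (m+1)^2$ by induction on $|w|$ from the third characterising identity for $\binom{\cdot}{\cdot}$ recalled just before Lemma~\ref{2tran}).
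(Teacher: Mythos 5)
Your proposal is correct and follows essentially the same route as the paper: the paper's proof expands the backward conditional probability via the Markov property (which is just the Section~\ref{DM_background} bridge formula specialized to this chain), plugs in the same ingredients (the uniform marginals $\binom{2m}{m}^{-1}$ from Lemma~\ref{2tran} and the one-step probability $\binom{w}{v}/((2m+1)(2m+2))$), arrives at $\binom{w}{v}/(m+1)^2$, and then gives exactly your combinatorial reading of $\binom{w}{v}$ as counting the $(a,b)$-deletion pairs among the $(m+1)^2$ possible ones. The only cosmetic difference is that you quote the general backward-transition formula from the background section where the paper re-derives it in place.
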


\begin{proof}
Consider the bridge from the empty word to $w \in \bW_k$.

 For $0 \leq m \le k-1$, $v \in \bW_{m+1}$,  and $u \in \bW_{m}$ we have
\begin{equation*}
    \begin{aligned}
    &\PP\{U_m^{w} = u \, | \,  U_{m+1}^{w} = v\} \\
    &= \frac
    {\PP\{U_m =  u, \,  U_{m+1}= v \, | \,  U_k = w\}}
    {\PP\{U_{m+1} = v \, | \, U_k = w\}} \\
    &= \frac
    {\PP\{U_m = u, \, U_{m+1} = v, \,  U_k = w\}}
    {\PP\{U_{m+1} = v, \,  U_k = w\}} \\
    &= \frac
    {\PP\{U_m = u\} 
    \PP\{U_{m+1} = v \, | \, U_m = u\} 
    \PP\{U_k = w \, | U_{m+1} = v\}
    }
    {
    \PP\{U_{m+1} = v\} \PP\{U_k = w \, | U_{m+1} = v\}
    } \\
    & = \binom{v}{u} \frac{\frac{1}{(2m+1)(2m+2)} \times \frac{m!m!}{(2m)!}  }{\frac{(m+1)!(m+1)!}{(2m+2)!}}\\
    &= \frac{\binom{v}{u}} {(m+1)^2}.
    \end{aligned}
\end{equation*}

In order to go backward from the word $v$ of length  $2(m+1)$  to
the word $u$ of length $2m$, we have to remove one  $a$  and one  $b$. 
There are $\binom{v}{u}$ pairs of  $a$  and  $b$   such that
the removal of the pair from $v$ results in $u$,
and there are a total of
$(m+1)^2$ pairs of $a$ and $b$ in $v$, and so the result follows
from the calculation above.
\end{proof}

\section{Labeled infinite bridges}

Suppose that 
$(y_n)_{n\in \bN}$ is a sequence of words in $\bW := \bigsqcup_{n \in \bN_0} \bW_n$
that converges in the Doob--Martin topology and is such that
$N(y_n) \to \infty$ as $n \to \infty$.  
Recall that $(U_0^{y_n}, \ldots, U_{N(y_n)}^{y_n})$, $n \in \bN$, is the associated bridge
that starts from the empty word and is tied to being in state 
$y_n$ at time $N(y_n)$. The finite dimensional
 distributions of $(U_0^{y_n}, \ldots, U_{N(y_n)}^{y_n})$ 
 converge as $n \rightarrow \infty$. Thus, there exists a process 
 $(U_n^\infty)_{n \in \bN_0}$ such that for every $k \in \bN_0$ 
 the random $(k+1)$-tuple $(U_0^{y_n}, \ldots, U_k^{y_n}$) 
 converges in distribution to $(U_0^\infty, \ldots, U_k^\infty)$.

The forward evolution dynamics of the Markov chain 
$(U_n^\infty)_{n \in \bN}$ depend on the sequence $(y_n)_{n \in \bN}$, 
whereas from Section~\ref{DM_background} and Theorem~\ref{back} the backward evolution is Markovian 
and doesn't depend on the sequence $(y_n)_{n \in \bN}$; given 
$U_{k+1}^\infty$, the word $U_k^\infty$ is obtained by removing one 
letter $a$ and one letter $b$ uniformly at random from $U_{k+1}^\infty$.

For each $n \in \bN_0$ the distribution of
$U_n^\infty$ defines the distribution of a random element $\tilde U_{n,n}^\infty$ of 
the set $\tilde \bW_n$ of words of length $2n$ drawn 
from the alphabet $\{a_1, b_1, \ldots, a_n, b_n\}$ with each letter appearing once
by assigning the labels $[n]$ uniformly at random to the letters $a$ and to the letters $b$.
More precisely, for $U_n^\infty = c_1 \ldots c_{2n}$, let $A_n := \{i \in [n] : c_i = a\}$
and $B_n := \{j \in [n] : c_j = b\}$, let $\Sigma: A_n \to [n]$
and $\Tau: B_n \to [n]$ be random bijections that are conditionally independent
and uniformly distributed given $U_n^\infty$, and define
$\tilde U_{n,n}^\infty := \tilde c_1 \ldots \tilde c_{2n}$ by
\[
\tilde c_k :=
\begin{cases}
a_{\Sigma(k)},& k \in A_n,\\
b_{\Tau(k)},& k \in B_n.
\end{cases}
\]

For $0 \le p \le n$, define $\tilde U_{n,p}^\infty$ to be the word obtained by deleting
$\{a_{p+1}, b_{p+1}, \ldots, a_n, b_n\}$ from $\tilde U_{n,n}^\infty$.
Observe that if $0 \le p \le m \wedge n$, then $\tilde U_{m,p}^\infty$, $\tilde U_{n,p}^\infty$ and
$\tilde U_{p,p}^\infty$ have the same distribution.  Moreover, if for $0 \le p \le n$ we let
$U_{n,p}^\infty$ be the result of removing the labels from $\tilde U_{n,p}^\infty$ (that is,
$U_{n,p}^\infty$ is the element of $\bW_p$ obtained by replacing the letters $a_k$, $1 \le k \le p$,
by the letter $a$ and the letters $b_k$, $1 \le k \le p$, by $b$), then
$(U_{n,0}^\infty, \ldots, U_{n,n}^\infty)$ has the same distribution as 
$(U_0^\infty, \ldots, U_n^\infty)$.

By Kolmogorov's consistency theorem, there is a process $(\tilde U_n^\infty)_{n \in \bN_0}$
such that $(\tilde U_0^\infty, \ldots, \tilde U_m^\infty)$ has the same distribution as
$(\tilde U_{n,0}^\infty, \ldots, \tilde U_{n,m}^\infty)$ for any $m \le n$ and
the result of removing the labels from $(\tilde U_n^\infty)_{n \in \bN_0}$
has the same distribution as $(U_n^\infty)_{n \in \bN_0}$.  
By the transfer theorem \cite[Theorem 6.10]{MR1876169},
we may even suppose that $(\tilde U_n^\infty)_{n \in \bN_0}$ is defined
on an extension of the probability space on which $(U_n^\infty)_{n \in \bN_0}$ is
defined in such a way that $(U_n^\infty)_{n \in \bN_0}$ is the result of
removing the labels from $(\tilde U_n^\infty)_{n \in \bN_0}$.

\section{The exchangeable random total order associated with an infinite bridge}
\label{comp}

A state of a labeled infinite bridge is a word of length $2n$ from the
alphabet $\{a_1, b_1, \ldots, a_n, b_n\}$ in which
each letter appears once.  Another way to think of such an object is
as a total order on the set $\bigcup_{k=1}^n\{a_k, b_k\}$.  Because the
labeled infinite bridge evolves by slotting in the letters $a_{n+1}$ and $b_{n+1}$ 
at the $(n+1)^{\mathrm{th}}$ step while leaving
the relative positions of $\{a_1, b_1, \ldots, a_n, b_n\}$ unchanged,
these successive total orders are consistent: the total order 
on $\{a_1, b_1, \ldots, a_n, b_n\}$ given by the state of the
infinite bridge at step $n$ is the same as the total order obtained by taking the
state of the infinite bridge at step $n+1$ (a total order
on $\{a_1, b_1, \ldots, a_n, b_n, a_{n+1}, b_{n+1}\}$)
and looking at the corresponding
induced total order on $\{a_1, b_1, \ldots, a_n, b_n\}$.

This projective structure means that we can associate 
any path of a labeled infinite bridge with a unique total order on
$\bI_0 := \bigcup_{n \in \bN} \{a_n,b_n\}$ such that the induced total
order on $\{a_1, b_1, \ldots, a_n, b_n\}$ coincides with the
state of the labeled infinite bridge at step $n$. 

We now introduce some general notions about random total orders. 

\begin{defn}
A {\em random total order}  $\prec$ on $\bI_0$
is a map from the underlying probability space to the collection
of total orders on $\bI_0$ such that the indicator
$\ind\{x \prec y\}$ is a random variable for every $x,y \in \bI_0$.
A random total order $\prec$
is {\em exchangeable} if for every $ n\in \bN$ the
induced total order $\prec^n$ on $\bigcup_{k=1}^n\{a_k, b_k\}$
has the same distribution as the random total order $\prec^n_{\sigma,\tau}$
 for any permutations $\sigma,\tau$ of $\{1,2,\ldots,n\}$,
where $\prec^n_{\sigma,\tau}$ is defined as follows:
\begin{itemize}
\item $a_{\sigma(i)}\prec^n_{\sigma,\tau}b_{\tau(j)}$ iff $a_i \prec^n b_j$
\item $b_{\tau(i)}\prec^n_{\sigma,\tau}a_{\sigma(j)}$ iff $b_i \prec^n a_j$
\item $a_{\sigma(i)}\prec^n_{\sigma,\tau}a_{\sigma(j)}$ iff $a_i \prec^n a_j$
 \item $b_{\tau(i)}\prec^n_{\sigma,\tau}b_{\tau(j)}$ iff $b_i \prec^n b_j$.
\end{itemize}
\end{defn}

\begin{rem}
The distribution of a random total order $\prec$ is determined by the joint distribution of the
random variables $\{\ind\{x \prec y\} : x,y \in \bigcup_{k=1}^n \{a_k,b_k\}\}$ 
for arbitrary $n \in \bN$.
\end{rem}

\begin{rem}
If $\prec$ is an exchangeable random total order, then
the induced random total orders $\prec^n$, $n \in \bN$, 
are consistent in the sense that
if we take the random total order $\prec^{n+1}$
on $\bigcup_{k=1}^{n+1}\{a_k,b_k\}$ and remove $\{a_{n+1}, b_{n+1}\}$, 
then the induced random total order on $\bigcup_{k=1}^{n}\{a_k,b_k\}$ 
is $\prec^n$.

Conversely, suppose for each $n \in \bN$ that there is a random
total order $\prec^n$ on $\bigcup_{k=1}^n\{a_k, b_k\}$, these
random total orders have the property that $\prec^n$
has the same distribution as $\prec^n_{\sigma,\tau}$
for any permutations $\sigma,\tau$ of $[n]$
for all $n \in \bN$, and these total orders are consistent.
Then there is an exchangeable random order $\prec$ on 
$\bI_0$ such that
$\prec^n$ is the corresponding induced total order on 
$\bigcup_{k=1}^n\{a_k, b_k\}$.
\end{rem}

In terms of these general notions, 
if we let $\prec^n$, $n \in \bN$, be the random total order on $\bigcup_{k=1}^n\{a_k,b_k\}$
corresponding to $\tilde U_n^\infty$, then these total orders are consistent and
there is an exchangeable random total order $\prec$ on $\bI_0$
such that the restriction of $\prec$ to $\bigcup_{k=1}^n\{a_k,b_k\}$ is $\prec^n$.

\section{Characterization of exchangeable random total orders}

The results of the previous sections indicate that if we want to understand
the Doob--Martin compactification, then we need to understand
infinite bridges, and this boils down to understanding
exchangeable random total orders on $\bI_0$.

A mixture of two exchangeable random total orders is also an exchangeable random total order, so we
are interested in exchangeable random total orders $\prec$ that are extremal in the sense that
their distributions cannot be written as a nontrivial mixture of the distributions
of two other exchangeable random total orders.
This is equivalent to requiring that if $A$ is a measurable subset of the space of total orders
on $\bI_0$ with the property that $\prec \in A$ if and only if $\prec^{\sigma,\tau} \in A$
for all finite permutations $\sigma, \tau$, then $\bP\{\prec \in A\} \in \{0,1\}$.  We say that
an exchangeable random total order with this property is {\em ergodic}.

The following result can be established using essentially the same argument as in
Proposition 5.19 (see also the subsequent Remark 5.20) of 
\cite{Remy}, and we omit the details.

\begin{lem}
The tail $\sigma$-field of an infinite bridge $(U_n^\infty)_{n \in \bN_0}$ is almost surely trivial
if and only if the exchangeable random total order induced by the corresponding
labeled infinite bridge $(\tilde U_n^\infty)_{n \in \bN_0}$ is ergodic.
\end{lem}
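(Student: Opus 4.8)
The plan is to prove the sharper assertion that the tail $\sigma$-field $\mathcal{T}$ of $(U_n^\infty)_{n \in \bN_0}$ coincides, up to $\bP$-null sets, with the $\sigma$-field $\mathcal{I}$ of events that are unchanged under $\prec \mapsto \prec^{\sigma,\tau}$ for all finite permutations $\sigma,\tau$ of the subscripts; the lemma follows at once, since triviality of one of these $\sigma$-fields is equivalent to triviality of the other. I work throughout on the extension on which $(\tilde U_n^\infty)_{n\in\bN_0}$, and hence $\prec$, is defined, and use that $\sigma(\prec) = \bigvee_n \sigma(\tilde U_n^\infty)$, that $U_n^\infty$ is $\tilde U_n^\infty$ with its subscripts erased, and that for $k \le n$ the word $\tilde U_k^\infty$ is the restriction of $\tilde U_n^\infty$ to the letters whose subscripts lie in $[k]$. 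For $n \in \bN$ let $\mathcal{E}_n$ be the $\sigma$-field of events unchanged under $\prec \mapsto \prec^{\sigma,\tau}$ for all permutations $\sigma,\tau$ of $[n]$; the $\mathcal{E}_n$ decrease and $\bigcap_n \mathcal{E}_n = \mathcal{I}$.

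The inclusion $\mathcal{T}\subseteq\mathcal{I}$ is immediate: for each $n$ and each $m\ge n$, the word $U_m^\infty$, viewed as a function of $\prec$, is unchanged by any relabeling $\prec\mapsto\prec^{\sigma,\tau}$ with $\sigma,\tau$ permutations of $[n]$, since erasing subscripts ignores such relabelings and these relabelings fix all letters with subscript exceeding $n$. Hence $\sigma((U_m^\infty)_{m\ge n})\subseteq\mathcal{E}_n$, so $\mathcal{T}=\bigcap_n\sigma((U_m^\infty)_{m\ge n})\subseteq\bigcap_n\mathcal{E}_n=\mathcal{I}$. In particular, if $\prec$ is ergodic then $\mathcal{T}$ is trivial.

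For the reverse inclusion $\mathcal{I}\subseteq\mathcal{T}$ modulo null sets --- the crux --- I would run a de Finetti / Hewitt--Savage style argument. Since the $\mathcal{E}_n$ decrease to $\mathcal{I}$, the reverse martingale convergence theorem gives $\bE[f\mid\mathcal{E}_n]\to\bE[f\mid\mathcal{I}]$ $\bP$-a.s.\ and in $L^1$ for every bounded $\sigma(\prec)$-measurable $f$. Because $\mathcal{E}_n$ is the invariant $\sigma$-field of the measure-preserving action of the finite group of permutations of $[n]$ acting separately on the $a$- and $b$-subscripts, $\bE[f\mid\mathcal{E}_n]$ is the average of $f(\prec^{\sigma,\tau})$ over all such $\sigma,\tau$. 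Now fix $k$, take $f=\phi(\tilde U_k^\infty)$, and let $n\ge k$; the relabeling $\prec\mapsto\prec^{\sigma,\tau}$ sends $\tilde U_k^\infty$ to the order induced on the $a$-positions carrying the old subscripts $\sigma^{-1}(1),\ldots,\sigma^{-1}(k)$ and the $b$-positions carrying $\tau^{-1}(1),\ldots,\tau^{-1}(k)$, and as $(\sigma,\tau)$ ranges uniformly these become an independent pair of uniformly random ordered $k$-subsets of the $n$ $a$-positions and the $n$ $b$-positions of $U_n^\infty$. Hence the orbit average depends on $\prec$ only through $U_n^\infty$; and since, conditionally on $U_n^\infty$, the word $\tilde U_n^\infty$ is a uniformly random subscripting of $U_n^\infty$, this orbit average equals $\bE[\phi(\tilde U_k^\infty)\mid U_n^\infty]$. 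Thus for every fixed $N$ the sequence $(\bE[\phi(\tilde U_k^\infty)\mid\mathcal{E}_n])_{n\ge N}$ is $\sigma((U_m^\infty)_{m\ge N})$-measurable, and so is its limit; intersecting over $N$ shows $\bE[\phi(\tilde U_k^\infty)\mid\mathcal{I}]$ is $\mathcal{T}$-measurable. As the functions $\phi(\tilde U_k^\infty)$, $k\in\bN$, are dense in $L^1(\sigma(\prec))$, the $\mathcal{T}$-measurable functions are $L^1$-closed, and $\bE[\,\cdot\mid\mathcal{I}]$ is an $L^1$-contraction, $\bE[f\mid\mathcal{I}]$ is $\mathcal{T}$-measurable for all bounded $\sigma(\prec)$-measurable $f$. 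Taking $f=\ind_A$ for $A\in\mathcal{I}$ gives $\ind_A=\bE[\ind_A\mid\mathcal{I}]$ $\mathcal{T}$-measurable modulo null sets, i.e.\ $\mathcal{I}\subseteq\mathcal{T}$ up to null sets; hence if $\mathcal{T}$ is trivial then $\prec$ is ergodic.

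I expect the main obstacle to be the identification of the orbit average $\bE[\phi(\tilde U_k^\infty)\mid\mathcal{E}_n]$ with $\bE[\phi(\tilde U_k^\infty)\mid U_n^\infty]$: this needs careful bookkeeping of how a permutation of the first $n$ subscripts acts on the restriction to the first $k$ subscripts, together with the fact --- read off from the construction of $(\tilde U_n^\infty)_{n\in\bN_0}$ --- that $\tilde U_n^\infty$ is, conditionally on $U_n^\infty$, a uniformly random subscripting of $U_n^\infty$. The remaining steps are routine reverse-martingale and monotone-class manipulations. One could instead bypass this computation by combining the affine bijection between infinite bridges and exchangeable random total orders with the standard facts that an infinite bridge is extremal exactly when its tail $\sigma$-field is trivial and that an exchangeable random total order is extremal exactly when it is ergodic, but the argument sketched here is the one that parallels Proposition~5.19 of \cite{Remy}.
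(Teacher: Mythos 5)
Your proposal is correct and is essentially the approach the paper intends: the paper omits the proof and cites Proposition~5.19 of \cite{Remy}, whose argument is exactly this identification of the tail $\sigma$-field with the exchangeable $\sigma$-field, obtained by recognizing the orbit average over relabelings of $[n]$ as $\bE[\,\cdot\mid U_n^\infty]$ (using exchangeability and the conditionally uniform subscripting) and passing to the limit by reverse martingale convergence. The one step you gloss (``intersecting over $N$'') is harmless: since $\bE[\phi(\tilde U_k^\infty)\mid\mathcal{I}]$ is a.s.\ equal to a $\sigma((U_m^\infty)_{m\ge N})$-measurable variable for every $N$, one further application of reverse martingale convergence along these decreasing $\sigma$-fields shows it is a.s.\ equal to a tail-measurable variable, which is all your density argument needs.
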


\begin{rem}
\label{producing_eerto}
There is one obvious way to produce an ergodic exchangeable random total order.  Let $\zeta$ and
$\eta$ be two diffuse probability measures on $\bR$.
Let $(V_n)_{n \in \bN}$ be i.i.d. with common distribution $\zeta$,
let $(W_n)_{n \in \bN}$ be i.i.d. with common distribution $\eta$,
and suppose that these two sequences are independent.
The total order $\prec$ on $\bI_0$ 
defined by declaring
that
\begin{itemize}
\item
$a_i \prec a_j$ if $V_i < V_j$,
\item
$b_i \prec b_j$ if $W_i < W_j$,
\item
$a_i \prec b_j$ if $V_i < W_j$,
\item
$b_i \prec a_j$ if $W_i < V_j$,
\end{itemize}
is exchangeable and ergodic;  exchangeability is obvious and ergodicity
is immediate from the Hewitt--Savage zero--one law applied to the i.i.d.
sequence $((V_n,W_n))_{n \in \bN}$ (indeed, it follows from the
Hewitt--Savage zero--one law that if $A$ is a measurable subset of the space of total orders
on $\bI_0$ with the property that $\prec \in A$ if and only if $\prec^{\rho, \rho} \in A$
for all finite permutations $\rho$, then $\bP\{\prec \in A\} \in \{0,1\}$).

We will show that
all ergodic exchangeable random total orders arise this way.
Note that many pairs of probability measures can give rise to
random total orders with the same distribution: replacing
$\zeta$ and $\eta$ by their push-forwards by some
common strictly increasing function does not change
the distribution of the resulting random total order.
\end{rem}

\begin{defn} 
\label{metric} 
Given an exchangeable random total order $\prec$ on $\bI_0$,
define 
$d: \bI_0 \times \bI_0 \to [0,1]$ 
by requiring that $d(x,x) = 0$ for all $x \in \bI_0$,
$d(x,y) = d(y,x)$ for all $x,y \in \bI_0$, and
\[
\begin{split}
d(x,y)
& := 
\limsup_{n\to \infty} \frac{1}{2n}\#\{1 \le k \le n: x \prec a_k \prec y\} \\
& \quad +
\limsup_{n\to \infty}\frac{1}{2n} \#\{1 \le \ell \le n : x \prec b_\ell \prec y\} \\
\end{split}
\]
for $x \prec y$.  It follows from exchangeability, de Finetti's theorem, and the strong law of large
numbers that in the above the superior limits are actually limits almost surely.  
\end{defn}

\begin{rem}
It is clear that by redefining $d$ on a $\bP$-null set we may assume for every $ x,y,z \in \bI_0 $ that
\begin{itemize}
\item $d(x,y)\geq 0$,
\item $d(x,y)=d(y,x)$,
\item $d(x,z) \le d(x,y)+d(y,z)$,
\item $d(x,y)=0$ if $x=y$.
\end{itemize}
\end{rem}

\begin{rem}
\label{distance_additive}
For distinct $x,y,z \in \bI_0 $
the triangle inequality $d(x,z) \le d(x,y)+d(y,z)$ can be sharpened 
to a statement that for all $x,y,z$
\begin{itemize}
\item 
$d(x,z) = d(x,y) + d(y,z)$ if $x \prec y \prec z$,
\item
$d(x,z) = d(x,y) - d(y,z)$ if $x \prec z \prec y$,
\item
$d(x,z) = d(y,z) - d(x,y)$ if $y \prec x \prec z$,
\end{itemize}
and three analogous equalities when $z \prec x$.
\end{rem}

\begin{prop} 
\label{proofmetric}
If $x,y \in \bI_0$ with $x \ne y$, then
$d(x,y) > 0$ almost surely.  Therefore almost surely $d$ is a metric.
\end{prop}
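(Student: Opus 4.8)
The plan is to prove the quantitative statement $\bP\{d(x,y)=0\}=0$ for each fixed pair of distinct elements $x,y\in\bI_0$; since $\bI_0$ is countable this gives, on a single event of full probability, that $d(x,y)>0$ for \emph{all} distinct $x,y$, and together with the almost surely valid relations $d\ge 0$, $d(x,y)=d(y,x)$, $d(x,z)\le d(x,y)+d(y,z)$, $d(x,x)=0$ recorded in the Remark above, this shows that $d$ is almost surely a metric. The argument for $\bP\{d(x,y)=0\}=0$ rests on de Finetti's theorem together with a crude counting estimate in $\prec^m$.

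Fix distinct $x,y\in\bI_0$ and an integer $m_0$ with $x,y\in\bigcup_{k\le m_0}\{a_k,b_k\}$. For $k>m_0$ put $\eta_k:=\bigl(\ind\{a_k\text{ is }\prec\text{-strictly between }x\text{ and }y\},\ \ind\{b_k\text{ is }\prec\text{-strictly between }x\text{ and }y\}\bigr)\in\{0,1\}^2$. Applying one finitely supported permutation of $\bN$ that fixes $[m_0]$ simultaneously to the $a$-indices and the $b$-indices leaves the distribution of $\prec$ unchanged by exchangeability, and it permutes the $\eta_k$ accordingly; hence $(\eta_k)_{k>m_0}$ is an exchangeable sequence. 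By de Finetti's theorem it is conditionally i.i.d.\ given its tail $\sigma$-field with a random directing law $\Theta$ on $\{0,1\}^2$, and by the strong law of large numbers the two coordinatewise averages $\tfrac1n\sum_{m_0<k\le m_0+n}\eta_k$ converge almost surely to the pair $(p^a,p^b)$ of $\Theta$-probabilities of a $1$ in the respective coordinate. Comparing with Definition~\ref{metric} --- the at most $m_0$ omitted small-index terms being negligible after division by $2n$ --- we get $d(x,y)=\tfrac12(p^a+p^b)$ almost surely. Consequently $\{d(x,y)=0\}$ forces $p^a=p^b=0$, i.e.\ $\Theta=\delta_{(0,0)}$, and therefore $\bP\{d(x,y)=0,\ \eta_k\ne(0,0)\text{ for some }k>m_0\}=0$. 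In other words, almost surely on $\{d(x,y)=0\}$ no letter $a_k$ or $b_k$ with $k>m_0$ lies $\prec$-strictly between $x$ and $y$, so for every $m>m_0$ the letters lying between $x$ and $y$ in $\prec^m$ all belong to $\bigcup_{k\le m_0}\{a_k,b_k\}\setminus\{x,y\}$, whence $x$ and $y$ occupy positions at distance at most $2m_0-1$ in the length-$2m$ word $\prec^m$.

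It then remains to estimate $\bP\{x\text{ and }y\text{ are within }2m_0-1\text{ positions in }\prec^m\}$ and let $m\to\infty$. For each $\ell$ a word of length $2m$ has exactly $2m-\ell$ pairs of positions at distance $\ell$, each holding a distinct unordered pair of letters, so $\sum_{\{u,v\}}\ind\{u,v\text{ at distance }\ell\text{ in }\prec^m\}=2m-\ell$; on the other hand exchangeability forces all unordered pairs of letters of a given type ($aa$, $ab$, or $bb$) to be equally likely to be at distance $\ell$ in $\prec^m$. Since there are $\binom m2$, $m^2$, and $\binom m2$ pairs of the three types, taking expectations gives $\bP\{x,y\text{ at distance }\ell\text{ in }\prec^m\}=O(1/m)$ uniformly in $\ell$, hence $\bP\{x,y\text{ within }2m_0-1\text{ positions in }\prec^m\}=O(m_0/m)\to0$. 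Combined with the previous paragraph this yields $\bP\{d(x,y)=0\}=0$, as required. The crux --- and the step I expect to be the main obstacle --- is the first half of the second paragraph: $d(x,y)=0$ is only a density statement, a priori consistent with infinitely many letters falling between $x$ and $y$, and it is precisely de Finetti's theorem that upgrades ``density zero'' to ``no such letters past a finite index'', which is what makes the elementary counting bound applicable; a little extra care is needed to absorb the finitely many low-index letters, which is why $m_0$ is kept fixed while $m\to\infty$.
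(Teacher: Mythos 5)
Your argument is correct, and it takes a genuinely different route from the paper's, most visibly in how it handles the mixed pair $\{a_k,b_\ell\}$. The paper splits into cases: for a pair of the same type it computes exactly the probability that no letter of that type falls between the two (a permutation count giving $2/n$), then invokes de Finetti to conclude the limiting density is positive; for the mixed pair this direct computation is unavailable (exchangeability never swaps $a$'s with $b$'s), so the paper passes to the random set $M=\{m: a_k\prec b_m\}$ and runs a conditional exchangeability argument inside $M$. You instead treat all three cases uniformly: de Finetti applied to the bivariate indicators $\eta_k$ upgrades $d(x,y)=0$ to ``no letter of index $>m_0$ lies between $x$ and $y$,'' hence $x$ and $y$ sit within $2m_0-1$ positions of each other in every $\prec^m$, and then your averaging bound --- summing the $2m-\ell$ adjacency-at-distance-$\ell$ indicators over all pairs and using that exchangeability makes pairs of a fixed type ($aa$, $ab$, $bb$) equidistributed --- gives $\bP\{x,y \text{ within } 2m_0-1 \text{ in } \prec^m\}=O(m_0/m)\to 0$. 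All the steps check out: the simultaneous permutation $\sigma=\tau=\rho$ fixing $[m_0]$ is a legitimate instance of the paper's exchangeability, the identification $d(x,y)=\tfrac12(p^a+p^b)$ a.s.\ is right (the finitely many low-index terms vanish after dividing by $2n$), the passage from a degenerate directing measure to ``all $\eta_k=(0,0)$'' is the standard conditional-i.i.d.\ argument, and the counting inequality $\binom m2 q_{aa}+m^2q_{ab}+\binom m2 q_{bb}=2m-\ell$ does give the needed $O(1/m)$ bound for each type separately. What your approach buys is uniformity and the avoidance of the two-stage conditional argument with $M$; what the paper's buys is sharper, fully explicit probabilities for the extreme events (e.g.\ $2/n$ and $1/r$) at the cost of a case split. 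Both rest on de Finetti, so the underlying mechanism is shared, but the key quantitative input is different.
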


\begin{proof} 
We need to show for $k,\ell \in \bN$ with $k \ne \ell$ that
$d(a_k, a_\ell) > 0$ and $d(b_k, b_\ell) > 0$, and, furthermore, for arbitrary
$k,\ell \in \bN$ that $d(a_k, b_\ell) > 0$.

Consider $d(a_k, a_\ell)$.
Set
\[
I_m := 
\ind(\{a_k \prec a_m \prec a_\ell\} \cup \{a_\ell \prec a_m \prec a_k\}), \quad m \notin\{k,\ell\}.
\]
Suppose that $\Pi_n$, $n \in \bN$ is a uniform random permutation of
$[n]$. 
By exchangeability of the total order, if $k \vee \ell \le n$, then
\[
\begin{split}
& \bP\{I_m = 0, \, 1 \le m \le n, \, m \notin\{k,\ell\}\} \\
& \quad =
\bP(\{\Pi_n(\ell) = \Pi_n(k) + 1\} \cup \{\Pi_n(k) = \Pi_n(\ell) + 1\}) \\
& \quad = 
2 (n-1) \frac{1}{n(n-1)} \\
& \quad = \frac{2}{n} \\
\end{split}
\]
and the random variables $\{I_m: m \in \bN, \, m \notin\{k,\ell\}\}$ are exchangeable.
It follows from de Finetti's theorem and the strong law of large numbers that
\[
\lim_{n \to \infty} \frac{1}{n} \#\{1 \le m \le n : a_k \prec a_m \prec a_\ell\}
=
\lim_{n \to \infty} \frac{1}{n} \sum_{m=1}^n I_m 
> 0
\]
almost surely and hence $d(a_k, a_\ell) > 0$.
A similar argument shows that $d(b_k, b_\ell) > 0$.

It remains to show that $d(a_k, b_\ell) > 0$.
Set $M := \{m \in \bN : a_k \prec b_m\}$.  It follows
from exchangeability that on the event 
$\{M \ne \emptyset\} \supseteq \{a_k \prec b_\ell\}$ we have $\# M = \infty$ almost surely
and indeed that $\lim_{n \to \infty} \frac{1}{n} \# (M \cap [n]) > 0$.
Write $M = \{m_1, m_2, \ldots\}$ with $m_1 < m_2 < \ldots$.   Fix $p \in \bN$ and set
\[
J_q := \ind\{b_{m_q} \prec b_{m_p}\}, \quad q \ne p.
\]
By exchangeability of the total order, if $p \vee q \le r$, then
\[
\bP\{J_q = 0, \, 1 \le q \le r, \, q \ne p \; | \; M \ne \emptyset\}
=
\bP\{\Pi_r(p) = 1\}
= 
\frac{1}{r}
\]
and the random variables $\{J_q: q \in \bN, \, q \ne p\}$ are conditionally exchangeable given 
$\{M \ne \emptyset\}$.
It follows from de Finetti's theorem that on the event $\{M \ne \emptyset\}$
\[
\lim_{n \to \infty} \frac{1}{n} \# \{q : m_q \in [n], \, a_k \prec b_{m_q} \prec b_{m_p}\} > 0
\]
almost surely and hence $d(a_k, b_\ell) > 0$ almost surely on the event $\{a_k \prec b_\ell\}$.
A similar argument shows that $d(a_k, b_\ell) > 0$ almost surely on the event $\{b_\ell \prec a_k\}$.
\end{proof}

\begin{defn}\label{comple}
Given an ergodic exchangeable random total order $\prec$ on
$\bI_0$, denote by $\bI$ the completion of $\bI_0$ with respect to the metric $d$.
\end{defn}

\begin{defn}
Define $f: \bI_0 \to [0,1]$ by 
\[
f(y) := \sup\{d(x,y) : x \in \bI_0, \, x \prec y\}.
\]
\end{defn}

\begin{rem}
\label{d_f_connection}
It follows from Remark~\ref{distance_additive} that 
\[
\begin{split}
f(y) & = \limsup_{n\to \infty} \frac{1}{2n}\#\{1 \le k \le n: a_k \prec y\} \\
& \quad +
\limsup_{n\to \infty}\frac{1}{2n} \#\{1 \le \ell \le n : b_\ell \prec y\}, \\
\end{split}
\]
\[
|f(x)-f(y)|  = d(x,y), \quad x,y\in \bI_0,
\] 
and
\[
f(x) < f(y) \Longleftrightarrow x \prec y, \quad x,y\in \bI_0,
\]
so that $f$ is an order-preserving isometry from $\bI_0$ into
$[0,1]$.  
Thus the function $f$ extends by continuity to an isometry from
$\bI$ into $[0,1]$ and if $\prec$ is extended to $\bI$ by declaring
that $x \prec y \Longleftrightarrow f(x) < f(y)$, then
$\prec$ is a total order on $\bI$ and $f$ is an order-preserving isometry 
from $\bI$ into $[0,1]$ and hence an order-preserving isometric bijection
from $\bI$ to the image set $\bJ:= f(\bI) \subseteq [0,1]$. Because $\bI$ is
complete, $\bJ$ is complete.  Because $\bJ$ is a complete subset of $[0,1]$
it is closed and hence compact, and therefore $\bI$ itself is compact.
It follows from the ergodicity of $\prec$ that $\bJ$ is almost surely constant.
We will see below that $\bJ = [0,1]$.
\end{rem}

\begin{rem}
\label{def_X_n_Y_n}
Define a sequence $((X_n, Y_n))_{n \in \bN}$ of $\bJ^2$-valued random variables 
by setting $X_n := f(a_n)$ and $Y_n := f(b_n)$.  The exchangeability of $\prec$ implies 
that if $\sigma$ and $\tau$ are two finite permutations
of $\bN$, then $((X_{\sigma(n)}, Y_{\tau(n)}))_{n \in \bN}$ has the same
distribution as $((X_n, Y_n))_{n \in \bN}$.  In particular, the sequence $((X_n, Y_n))_{n \in \bN}$ is exchangeable.  
It is a consequence of
de Finetti's theorem and the ergodicity of $\prec$ that this sequence is i.i.d. with common distribution
some probability measure $\pi$ on $\bJ^2$.  It follows from the next result that
$\pi = \mu \otimes \nu$ for two probability measures $\mu$ and $\nu$ on $\bJ$ that we call the {\em canonical pair}. Because $X_m \ne X_n$ and
$Y_m \ne Y_n$ almost surely for $m \ne n$, the probability measures $\mu$ and $\nu$ must be diffuse.
\end{rem}

\begin{lem}\label{ind2}
Suppose that the random variables $X',Y',X'',Y''$ are such that
\begin{enumerate}
\item $(X',Y')\,{\buildrel d \over =}\,  (X'',Y'')$
\item $((X',Y'),(X'',Y''))\,{\buildrel d \over =}\, ((X',Y''),(X'',Y'))$
\item $(X',Y')\independent (X'',Y'')$.
\end{enumerate}
Then $X',X'',Y',Y''$ are independent.
\end{lem}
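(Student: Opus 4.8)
The plan is to exploit conditions (1)--(3) to show that the joint law of $(X', Y', X'', Y'')$ factors completely. First I would introduce the four-variable distribution and record what (1)--(3) give us. By (3) the joint law is $\mathrm{Law}(X',Y') \otimes \mathrm{Law}(X'',Y'')$, and by (1) both factors equal a common law, say $\pi$, so $(X',Y',X'',Y'')$ has law $\pi \otimes \pi$ on the product space. Condition (2) says that swapping the $Y$-coordinates between the two pairs --- i.e. the map $(x',y',x'',y'') \mapsto (x',y'',x'',y')$ --- preserves this law. The goal is to deduce $\pi = \mu \otimes \nu$ where $\mu$ and $\nu$ are the two marginals of $\pi$; once that is known, $\pi \otimes \pi = \mu \otimes \nu \otimes \mu \otimes \nu$, which is exactly the asserted independence of the four coordinates.

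The key step is therefore: a probability measure $\pi$ on a product space whose two-fold product $\pi \otimes \pi$ is invariant under the ``cross-swap'' $(x',y',x'',y'') \mapsto (x',y'',x'',y')$ must itself be a product measure. To see this I would test against bounded measurable functions. For bounded measurable $g,h$ on the respective coordinate spaces, invariance under the cross-swap applied to the function $(x',y',x'',y'') \mapsto g(x') h(y'')$ versus its image $(x',y',x'',y'') \mapsto g(x') h(y')$ gives, after integrating out the coordinates that do not appear,
\[
\Big( \int g \, d\mu \Big)\Big( \int h \, d\nu \Big)
= \int g(x') h(y') \, \pi(dx'\,dy'),
\]
where $\mu$ (resp. $\nu$) is the first (resp. second) marginal of $\pi$. (Concretely: on the left the swap has decoupled $x'$ from the pair it originally belonged to, so one gets the product of marginals; on the right one is back to an integral against $\pi$ because after the swap $x'$ and $y'$ sit in what was the first pair.) Since this holds for all bounded measurable $g,h$, we conclude $\pi = \mu \otimes \nu$, and then $\pi \otimes \pi = \mu \otimes \nu \otimes \mu \otimes \nu$, proving that $X', Y', X'', Y''$ are independent.

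The main obstacle --- really the only place that needs care --- is getting the bookkeeping of the cross-swap exactly right: one must check that applying the measure-preserving map of (2) to a test function that depends only on $x'$ and $y''$ produces a test function depending only on $x'$ and $y'$, and that the pushforward identity $\int F \, d(\pi \otimes \pi) = \int (F \circ \text{swap}) \, d(\pi \otimes \pi)$ then collapses (using Fubini and (1) for the marginal computation on one side) to the displayed product-versus-$\pi$ identity. Everything else is routine measure theory. I would also remark that no topological assumptions on the coordinate spaces are needed beyond those implicit in the ambient setting, since the argument is purely in terms of integration of bounded measurable functions and Fubini's theorem for the product measure.
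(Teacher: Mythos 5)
Your proposal is correct and uses essentially the same argument as the paper: conditions (3), (2), and (1) are applied in the same way to decouple $X'$ from $Y'$ (swap $Y'$ with $Y''$, use cross-pair independence, then match marginals). The only difference is organizational --- you first establish that $\mathrm{Law}(X',Y')$ factors as $\mu\otimes\nu$ and then tensor, while the paper carries out the identical chain of equalities directly on four-dimensional rectangles $A'\times B'\times A''\times B''$.
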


\begin{proof}
For Borel sets $A', A'', B', B''$ we have
\begin{equation*}
\begin{aligned}
&\PP \{X' \in A',\, X''\in A'',\, Y' \in B',\, Y'' \in B''\}& \\
&= \PP \{X' \in A',\, Y'\in B'\}\PP\{ X'' \in A'',\, Y'' \in B''\} & \text{by (3)}\\
&= \PP\{X' \in A',\, Y''\in B'\}\PP\{ X'' \in A'',\, Y' \in B''\} & \text{by (2)}\\
&= \PP\{X' \in A'\}\PP\{Y''\in B'\}\PP\{X'' \in A''\}\PP\{Y' \in B''\} & \text{by (3)}\\
&= \PP\{X' \in A'\}\PP\{Y'\in B'\}\PP\{X'' \in A''\} \PP\{Y'' \in B''\}& \text{by (1)}.
\end{aligned}
\end{equation*}
\end{proof}

\begin{thm}
Any ergodic exchangeable random total order $\prec$ has the same distribution as one given by the
construction in Remark~\ref{producing_eerto} for some pair of diffuse probability measures
$(\zeta, \eta)$ on $\bR$.  The canonical pair of diffuse probability measures $(\mu, \nu)$ on $[0,1]$
is uniquely determined by the moment formulae
\[
\begin{split}
& \int_{[0,1]} x^n \, \mu(dx) \\
& \quad =
\left(\frac{1}{2}\right)^n \sum_{c \in \prod_{k=1}^n \{a_k, b_k\}} \bP\{c_1 \prec a_{n+1}, \, \ldots, \, c_n \prec a_{n+1}\} \\
\end{split}
\] 
and
\[
\begin{split}
& \int_{[0,1]} y^n \, \nu(dy) \\
& \quad =
\left(\frac{1}{2}\right)^n \sum_{c \in \prod_{k=1}^n \{a_k, b_k\}} \bP\{c_1 \prec b_{n+1}, \, \ldots, \, c_n \prec b_{n+1}\}. \\
\end{split}
\]
The probability measure $\frac{1}{2}(\mu + \nu)$ is Lebesgue measure on $[0,1]$ and, in particular, $\bJ = [0,1]$.
Moreover, $\mu$ and $\nu$ are the respective push-forwards
of $\zeta$ and $\eta$ by the function $z \mapsto \frac{1}{2}(\zeta + \eta)((-\infty,z])$ 
\end{thm}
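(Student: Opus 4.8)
The plan is to exploit the i.i.d.\ structure of $((X_n,Y_n))_{n\in\bN}$ recorded just before the statement, together with Proposition~\ref{proofmetric} and the identities in Remark~\ref{d_f_connection}. First I would apply Lemma~\ref{ind2} to the quadruple $(X_1,Y_1,X_2,Y_2)$. Hypothesis~(1) is $(X_1,Y_1)\overset{d}{=}(X_2,Y_2)$, hypothesis~(3) is the independence of $(X_1,Y_1)$ and $(X_2,Y_2)$, and hypothesis~(2), that $((X_1,Y_1),(X_2,Y_2))\overset{d}{=}((X_1,Y_2),(X_2,Y_1))$, is exactly the separate exchangeability of the $a$- and $b$-indices noted in Remark~\ref{def_X_n_Y_n}. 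The conclusion is that $X_1,X_2,Y_1,Y_2$ are independent, so the common law $\pi$ of the pairs $(X_n,Y_n)$ factors as $\mu\otimes\nu$ with $\mu:=\mathcal L(X_n)$ and $\nu:=\mathcal L(Y_n)$; these are diffuse because $d(a_m,a_n)>0$ and $d(b_m,b_n)>0$ almost surely for $m\ne n$ by Proposition~\ref{proofmetric}, so $X_m\ne X_n$ and $Y_m\ne Y_n$ almost surely. (This $(\mu,\nu)$ is the canonical pair of Remark~\ref{def_X_n_Y_n}.)

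For the first assertion, recall from Remark~\ref{d_f_connection} that $f$ is an order-preserving isometry, so $a_i\prec a_j\iff X_i<X_j$, $a_i\prec b_j\iff X_i<Y_j$, $b_i\prec a_j\iff Y_i<X_j$, $b_i\prec b_j\iff Y_i<Y_j$, and by Proposition~\ref{proofmetric} the reals $\{X_n\}\cup\{Y_n\}$ are pairwise distinct almost surely (for $X_i\ne Y_j$ use $d(a_i,b_j)=|X_i-Y_j|>0$). Hence the distribution of $\prec$ is a fixed deterministic function of the distribution $\bigotimes_{n}(\mu\otimes\nu)$ of $((X_n,Y_n))_n$, and that is precisely the law of the total order produced by the construction of Remark~\ref{producing_eerto} with $(\zeta,\eta)=(\mu,\nu)$ (regarded as diffuse measures on $\bR$), which is well defined almost surely for the same no-ties reason. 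Next, to show $\tfrac12(\mu+\nu)=\lambda$, fix $i$ and use Remark~\ref{d_f_connection}: $X_i=f(a_i)=\lim_n\frac1{2n}\#\{k\le n:a_k\prec a_i\}+\lim_n\frac1{2n}\#\{\ell\le n:b_\ell\prec a_i\}$; conditionally on $X_i$ the $X_k$, $k\ne i$, are i.i.d.\ $\mu$ and the $Y_\ell$ are i.i.d.\ $\nu$, so by the strong law of large numbers the two limits equal $\tfrac12\mu((-\infty,X_i))$ and $\tfrac12\nu((-\infty,X_i))$, giving $X_i=F(X_i)$ a.s., where $F(t):=\tfrac12(\mu+\nu)((-\infty,t])$ (the half-open versus half-closed distinction being immaterial since $\tfrac12(\mu+\nu)$ is diffuse). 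The same holds with $Y_j$ in place of $X_i$, so a variable $Z$ with law $\tfrac12(\mu+\nu)$ satisfies $Z=F(Z)$ a.s.; since $F$ is continuous, $F(Z)$ is uniform on $[0,1]$ by the probability integral transform, whence $\tfrac12(\mu+\nu)=\lambda$. Then $\bJ$, which by Remark~\ref{d_f_connection} is the almost surely constant closure in $[0,1]$ of $f(\bI_0)=\{X_n,Y_n:n\in\bN\}$, equals $\operatorname{supp}\mu\cup\operatorname{supp}\nu=\operatorname{supp}\tfrac12(\mu+\nu)=[0,1]$ almost surely (the closure of an i.i.d.\ sample being a.s.\ the support of its law).

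For the final ``moreover'', let $(\zeta,\eta)$ be any diffuse pair on $\bR$ whose construction reproduces the law of $\prec$, with defining sequences $(V_n),(W_n)$. The computation above, now with $V_i$ in place of $X_i$, gives $X_i=f(a_i)=\tfrac12\zeta((-\infty,V_i))+\tfrac12\eta((-\infty,V_i))=G(V_i)$, where $G(z):=\tfrac12(\zeta+\eta)((-\infty,z])$ (again using diffuseness), and likewise $Y_j=G(W_j)$; taking laws yields $\mu=G_*\zeta$ and $\nu=G_*\eta$. Finally, for the moment formulae, note $X_{n+1}\overset{d}{=}X_1$ has law $\mu$, and $\bE[\ind\{Z<X_{n+1}\}\mid X_{n+1}]=\tfrac12(\mu+\nu)((-\infty,X_{n+1}))=X_{n+1}$ whenever $Z\sim\lambda$ is independent of $X_{n+1}$. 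Introduce i.i.d.\ fair signs $(\epsilon_k)_{k=1}^n$, independent of $((X_m,Y_m))_m$, set $Z_k:=X_k$ on $\{\epsilon_k=a_k\}$ and $Z_k:=Y_k$ on $\{\epsilon_k=b_k\}$, so that $Z_1,\dots,Z_n$ are i.i.d.\ $\lambda$ and independent of $X_{n+1}$, and expand
\[
\int_{[0,1]}x^n\,\mu(dx)=\bE\big[X_{n+1}^n\big]=\bE\Big[\prod_{k=1}^n\ind\{Z_k<X_{n+1}\}\Big]=\Big(\tfrac12\Big)^n\sum_{c\in\prod_{k=1}^n\{a_k,b_k\}}\bE\Big[\prod_{k=1}^n\ind\{Z_k^{(c)}<X_{n+1}\}\Big],
\]
where $Z_k^{(c)}:=X_k$ if $c_k=a_k$ and $Z_k^{(c)}:=Y_k$ if $c_k=b_k$. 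Since $X_k<X_{n+1}\iff a_k\prec a_{n+1}$, $Y_k<X_{n+1}\iff b_k\prec a_{n+1}$, and there are almost surely no ties (Proposition~\ref{proofmetric}), the inner expectation is $\bP\{c_1\prec a_{n+1},\dots,c_n\prec a_{n+1}\}$, which is the stated formula for $\mu$; replacing $a_{n+1}$ by $b_{n+1}$ (and $\mu$ by $\nu$) gives the formula for $\nu$. Uniqueness is immediate because $\mu$ and $\nu$ are supported on $[0,1]$ and a compactly supported probability measure is determined by its moments.

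The computations are routine once the machinery of the previous sections is in place; the two points needing care are the passage from the fixed-point identity $X_i=F(X_i)$ to the equality $\tfrac12(\mu+\nu)=\lambda$ via the probability integral transform, and the bookkeeping required to recognize hypothesis~(2) of Lemma~\ref{ind2} as precisely the two-sided exchangeability of Remark~\ref{def_X_n_Y_n}.
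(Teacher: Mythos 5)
Your proof is correct and takes essentially the same route as the paper: the factorization of the common law of $(X_n,Y_n)$ via Lemma~\ref{ind2}, the identification of $\prec$ with the construction of Remark~\ref{producing_eerto} through the order-preserving isometry $f$, the coin-flip mixture $Z_k$ together with the fixed-point identity $Z=\tfrac12(\mu+\nu)((-\infty,Z])$ to conclude $\tfrac12(\mu+\nu)=\lambda$, and the same expansion over $\prod_{k=1}^n\{a_k,b_k\}$ for the moment formulae. You merely spell out steps the paper leaves implicit or omits (the probability integral transform, $\bJ=[0,1]$, and the final push-forward claim), and those added details are sound.
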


\begin{proof}
We have already shown that an
ergodic exchangeable random total order has the same distribution as one built from an arbitrary
pair $(\zeta, \eta)$ of diffuse probability measures on $\bR$ using the
construction in Remark~\ref{producing_eerto}.

Define $((X_n,Y_n))_{n \in \bN}$ as in Remark~\ref{def_X_n_Y_n}.
It follows from Remark~\ref{d_f_connection} that 
\[
X_n = \frac{1}{2} \mu((-\infty,X_n]) + \frac{1}{2} \nu((-\infty,X_n])
\]
and
\[
Y_n = \frac{1}{2} \mu((-\infty,Y_n]) + \frac{1}{2} \nu((-\infty,Y_n])
\]
for any $n \in \bN$.  Let $(I_n)_{n \in \bN}$ be a sequence of i.i.d. random variables that
is independent of $((X_n, Y_n))_{n \in \bN}$ with $\bP\{I_n = 0\} = \bP\{I_n = 1\} = \frac{1}{2}$
and set $Z_n := I_n X_n + (1 - I_n) Y_n$ so that the sequence $(Z_n)_{n \in \bN}$ is i.i.d. with common
distribution $\frac{1}{2}(\mu + \nu)$.  We have
\[
Z_n = \frac{1}{2}(\mu + \nu)((-\infty, Z_n]),
\]
and so $\frac{1}{2}(\mu + \nu)$ is Lebesgue measure on $[0,1]$.
Thus, for any $n \in \bN$ 
\[
\begin{split}
\int_{[0,1]} x^n \, \mu(dx)
& =
\bP\{Z_1<X_{n+1}, \, \ldots, \, Z_n<X_{n+1}\} \\
& =
\left(\frac{1}{2}\right)^n \sum_{c \in \prod_{k=1}^n \{a_k, b_k\}} \bP\{c_1 \prec a_{n+1}, \, \ldots, \, c_n \prec a_{n+1}\} \\
\end{split}
\] 
and
\[
\begin{split}
\int_{[0,1]} y^n \, \nu(dy)
& =
\bP\{Z_1<Y_{n+1}, \, \ldots, \, Z_n<Y_{n+1}\} \\
& =
\left(\frac{1}{2}\right)^n \sum_{c \in \prod_{k=1}^n \{a_k, b_k\}} \bP\{c_1 \prec b_{n+1}, \, \ldots, \, c_n \prec b_{n+1}\}, \\
\end{split}
\]
as claimed.

The proof of the final claim is straightforward and we omit it.
\end{proof}

\begin{rem}
We haven't shown that if $(y_k)_{k \in \bN}$ is a sequence of points of $\bW$, where  $y_k \in \bW_{N(y_k)}$,
$N(y_k) \to \infty$ as $k \to \infty$, and 
$\lim_{k \to \infty} y_k = y$ in the Doob--Martin topology for some arbitrary $y$ in the Doob--Martin boundary,
then the harmonic function $K(\cdot,y)$ is extremal.   This is equivalent to showing that if the infinite bridge $(U_n^\infty)_{n \in \bN_0}$
is the limit of the bridges $(U_0^{y_k}, \ldots, U_{N(y_k)}^{y_k})$, then $(U_n^\infty)_{n \in \bN_0}$ has an almost surely trivial
tail $\sigma$-field.  This is, in turn, equivalent to showing that the corresponding labeled infinite bridge induces an ergodic
exchangeable random order.  The latter, however, can be established along the lines of \cite[Corollary~5.21]{Remy} and \cite[Corollary~7.2]{radix},
so we omit the details.
\end{rem}

\section{Identification of extremal harmonic functions}

Any extremal infinite bridge $(U_n^\infty)_{n \in \bN_0}$
is the $h$-transform
of our original Markov chain with an extreme harmonic function $h$.
We know from the above that such a process arises as follows
in terms of the canonical pair $(\mu, \nu)$ of diffuse probability measures
associated with the corresponding point in the Doob--Martin boundary.

We first require some notation.
Given $(x_1, \ldots, x_n, y_1, \ldots, y_n) \in \bR^{2n}$ with distinct
entries, let $z_1 < \cdots < z_{2n}$ be a listing of
$\{x_1, \ldots, x_n, y_1, \ldots, y_n\}$ in increasing order.
Define 
\[
\cW((x_1, \ldots, x_n, y_1, \ldots, y_n)) = u_1 \ldots u_{2n} \in \bW_n
\] 
by
\[
u_i = 
\begin{cases}
a,& \quad \text{if $z_i \in \{x_1, \ldots, x_n\}$}, \\
b,& \quad \text{if $z_i \in \{y_1, \ldots, y_n\}$}. \\
\end{cases}
\]
Given $v \in \bW_n$, set
\[
\cS(v) := \cW^{-1}(\{v\}) \subset \bR^{2n}.
\]
For example,
\[
\cS(abba) = \bigsqcup_{\sigma, \tau} \{(x_1,x_2,y_1,y_2) \in \bR^4: x_{\sigma(1)} < y_{\tau(1)} < y_{\tau(2)} < x_{\sigma(2)}\},
\]
where the union is over all pairs of permutations $\sigma, \tau$ of the set $\{1,2\}$.  In general, $\cS(v)$ is the disjoint union
of $(n!)^2$ connected open sets that all have boundaries of zero Lebesgue measure.

Now take independent
sequences of real-valued random variables $(X_k)_{k \in \bN}$ and
$(Y_k)_{k \in \bN}$, where the $X_k$ are i.i.d. with common distribution $\mu$
and the $Y_k$ are i.i.d. with common distribution $\nu$
and set
\[
U_n^\infty
=
\cW((X_1, \ldots, X_n, Y_1, \ldots, Y_n)).
\]

We have
\[
\bP\{U_n^\infty = u\} 
= 
\mu^{\otimes n} \otimes \nu^{\otimes n}(\cS(u))
\]
We also know that
\[
\bP\{U_n^\infty = u \, | \, U_{n+1}^\infty = v\}
=
\frac{\binom{v}{u}}{(n+1)^2}.
\]
It follows that
\[
\begin{split}
& \bP\{U_{n+1}^\infty = v \, | \, U_n^\infty = u\} \\
& \quad =
\mu^{\otimes (n+1)} \otimes \nu^{\otimes (n+1)}(\cS(v)) \frac{\binom{v}{u}}{(n+1)^2}
\bigg / 
\mu^{\otimes n} \otimes \nu^{\otimes n}(\cS(u)).\\
\end{split}
\]
On the other hand,
\[
\begin{split}
\bP\{U_{n+1}^\infty = v \, | \, U_n^\infty = u\}
& =
\frac{1}{h(u)} \bP\{U_{n+1} = v \, | \, U_n = u\} h(v) \\ 
& = 
\frac{h(v)}{h(u)}
\frac{\binom{v}{u}}{(2n+2)(2n+1)}.\\
\end{split}
\]
Thus,
\[
\frac{h(v)}{h(u)}
=
\frac
{\mu^{\otimes (n+1)} \otimes \nu^{\otimes (n+1)}(\cS(v))}
{\mu^{\otimes n} \otimes \nu^{\otimes n}(\cS(u))}
\frac{(2n+2)(2n+1)}{(n+1)^2}
\]
and, up to an arbitrary multiplicative constant,
\[
h(w) = 
\binom{2m}{m}
\mu^{\otimes m} \otimes \nu^{\otimes m}(\cS(w))
\]
for $w \in \bW_m$.

Since $h(\emptyset) = 1$, this normalization is the extended Doob--Martin kernel
$w \mapsto K(w,y)$, where $y$ is the point in the Doob--Martin boundary that corresponds
to the pair of diffuse probability measures $(\mu, \nu)$.

\begin{rem}
The constant harmonic function $h \equiv 1$ arises from the above construction
with $\mu$ and $\nu$ both being the Lebesgue measure $\lambda$ on $[0,1]$.  Therefore the
process $(U_n)_{n \in \bN_0}$ is itself the extremal bridge associated with the canonical
pair $(\lambda, \lambda)$.  In particular, $(U_n)_{n \in \bN_0}$ converges almost surely
to the point in the Doob--Martin boundary associated with this pair.
\end{rem}


We observed in Remark~\ref{identification_D_M_topology}
that a sequence  $(y_k)_{k \in \bN}$ with $N(y_k) \to \infty$
as $k \to \infty$ converges in the Doob--Martin topology 
 if and only if
for every $m \in \bN$ the sequence of random words in $\bW_m$ obtained by selecting
$m$ letters $a$ and $m$ letters $b$ uniformly at random from $y_k$ and maintaining their
relative order converges in distribution as $k \to \infty$.  We can now enhance
that result as follows.

\begin{prop}
\label{convergence_criterion}
Consider a sequence $(y_k)_{k \in \bN}$ in $\bW$, where  $y_k \in \bW_{N(y_k)}$, $k \in \bN$,
and $N(y_k) \to \infty$ as $k \to \infty$.
If $y$ is the point in the Doob--Martin boundary that corresponds
to the pair of (diffuse) probability measures $(\mu, \nu)$ with $\frac{1}{2}(\mu+\nu) = \lambda$, 
then $\lim_{k \to \infty} y_k = y$ in the Doob--Martin topology if and only if
\[
\lim_{k \to \infty} \frac{\binom{y_k}{w}} {{\binom{N(y_k)}{m}}^2}
=
\mu^{\otimes m} \otimes \nu^{\otimes m}(\cS(w))
\]
for all $w \in \bW_m$ for all $m \in \bN$.  That is, $\lim_{k \to \infty} y_k = y$
if and only if for each $m \in \bN$ the sequence of random words in $\bW_m$ obtained
by selecting
$m$ letters $a$ and $m$ letters $b$ uniformly at random from $y_k$ and maintaining their
relative order converges in distribution as $k \to \infty$ to the random word
$U_m^\infty = \cW(X_1, \ldots, X_m, Y_1, \ldots, Y_m)$ defined above.
\end{prop}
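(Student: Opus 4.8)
The plan is to reduce the statement to the definition of convergence in the Doob--Martin topology combined with two explicit kernel formulas that are already in hand: the formula $K(v,w) = \binom{w}{v}\binom{2m}{m}\big/\binom{m+n}{n}^2$ for $v \in \bW_m$, $w \in \bW_{m+n}$ from the Corollary above, and the identification $w \mapsto K(w,y) = \binom{2m}{m}\,\mu^{\otimes m}\otimes\nu^{\otimes m}(\cS(w))$ for $w \in \bW_m$ obtained in the discussion immediately preceding this proposition. No new ideas should be required; the content is purely that of assembling these pieces.

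First I would recall that, by the definition of the Doob--Martin compactification in Section~\ref{DM_background}, the assertion $\lim_{k\to\infty} y_k = y$ holds precisely when $\lim_{k\to\infty} K(w,y_k) = K(w,y)$ for every $w \in \bW$. Fix $w \in \bW_m$. For all $k$ large enough that $N(y_k) \ge m$, the kernel formula, applied with $w$ in the role of the ``small'' state $v$ and $y_k$ in the role of the ``large'' state (so that $m+n = N(y_k)$ and $\binom{m+n}{n}^2 = \binom{N(y_k)}{m}^2$), gives
\[
K(w,y_k) = \binom{y_k}{w}\,\frac{\binom{2m}{m}}{\binom{N(y_k)}{m}^2},
\]
while the identification of the extended kernel gives $K(w,y) = \binom{2m}{m}\,\mu^{\otimes m}\otimes\nu^{\otimes m}(\cS(w))$. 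Dividing both through by the positive constant $\binom{2m}{m}$, the requirement $\lim_k K(w,y_k) = K(w,y)$ is exactly the displayed limit in the statement, and since $w$ ranges over all of $\bigsqcup_m \bW_m = \bW$ this yields the first equivalence.

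For the reformulation in terms of convergence in distribution of sampled words, I would observe, exactly as in Remark~\ref{identification_D_M_topology}, that if one selects $m$ of the $N(y_k)$ letters $a$ and $m$ of the $N(y_k)$ letters $b$ of $y_k$ uniformly at random and lists them in their relative order in $y_k$, then each of the $\binom{N(y_k)}{m}^2$ selections is equally likely and exactly $\binom{y_k}{w}$ of them produce the word $w$ (such a selection being precisely a sub-word embedding of $w$ into $y_k$), so the sampled word equals $w$ with probability $\binom{y_k}{w}\big/\binom{N(y_k)}{m}^2$. On the other hand $\bP\{U_m^\infty = w\} = \mu^{\otimes m}\otimes\nu^{\otimes m}(\cS(w))$ by the construction of $U_m^\infty = \cW(X_1, \ldots, X_m, Y_1, \ldots, Y_m)$. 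Since $\bW_m$ is finite, convergence in distribution of the sampled word to $U_m^\infty$ is nothing but pointwise convergence of these mass functions, which is again the displayed limit.

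The argument is essentially bookkeeping, so there is no genuine obstacle; the substantive work — the evaluation of $K(\cdot,y)$ as $\binom{2m}{m}\mu^{\otimes m}\otimes\nu^{\otimes m}(\cS(\cdot))$ for the boundary point attached to an arbitrary canonical pair $(\mu,\nu)$ with $\frac12(\mu+\nu)=\lambda$ — has already been carried out. The only points requiring a little care are matching the two word-arguments correctly in the kernel formula and confirming that the denominator $\binom{m+n}{n}^2$ is indeed $\binom{N(y_k)}{m}^2$, both of which are immediate.
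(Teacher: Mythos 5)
Your proof is correct and is essentially the argument the paper intends: the proposition is left as an immediate consequence of the kernel formula $K(w,y_k)=\binom{y_k}{w}\binom{2m}{m}\big/\binom{N(y_k)}{m}^2$, the identification $K(w,y)=\binom{2m}{m}\,\mu^{\otimes m}\otimes\nu^{\otimes m}(\cS(w))$ from the preceding section, and the sampling interpretation of Remark~\ref{identification_D_M_topology}, which is exactly how you assemble it. No further comment is needed.
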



Given a sequence $(y_k)_{k \in \bN}$ in $\bW$, where  $y_k \in \bW_{N(y_k)}$, $k \in \bN$,
and $N(y_k) \to \infty$ as $k \to \infty$, define a sequence of pairs of discrete probability
measures $((\mu_k, \nu_k))_{k \in \bN}$ on $[0,1]$ as follows.  For $k \in \bN$ the two
probability measure $\mu_k$ and $\nu_k$ both assign all of their mass to the set $\{\frac{\ell}{2 N(y_k)} : 1 \le \ell \le 2 N(y_k)\}$.
For $1 \le i \le 2 N(y_k)$, $\mu_k(\frac{i}{2 N(y_k)}) = \frac{1}{N(y_k)}$ if the $i^{\mathrm{th}}$ letter of $y_k$ is the letter $a$, otherwise
$\mu_k(\frac{i}{2 N(y_k)}) = 0$.  Similarly, for $1 \le j \le 2 N(y_k)$, $\nu_k(\frac{j}{2 N(y_k)}) = \frac{1}{N(y_k)}$
 if the $j^{\mathrm{th}}$ letter of $y_k$ is the letter $b$, otherwise
$\nu_k(\frac{j}{2 N(y_k)}) = 0$.  In particular, $\frac{1}{2}(\mu_k + \nu_k)$ is the uniform probability measure
on $\{\frac{\ell}{2 N(y_k)} : 1 \le \ell \le 2 N(y_k)\}$.  Observe that if $w \in \bW_m$, then, for $w \in \bW_m$,
\[
(N(y_k)^m)^2 \mu_k^{\otimes m} \otimes \nu_k^{\otimes m}(\cS(w))
=
(m!)^2 \binom{y_k}{w}
\]
so that
\[
\frac{\binom{y_k}{w}} {{\binom{N(y_k)}{m}}^2} = \left(\frac{N(y_k)^m}{N(y_k) (N(y_k)-1) \cdots (N(y_k) - m + 1)}\right)^2 \mu_k^{\otimes m} \otimes \nu_k^{\otimes m}(\cS(w)).
\]
One direction of the following corollary is now immediate.

\begin{cor}
Suppose that $(y_k)_{k \in \bN}$ and $((\mu_k, \nu_k))_{k \in \bN}$ are as above.  If $(y_k)_{k \in \bN}$ converges in the Doob--Martin topology to
the point $y$ in the Doob--Martin boundary that corresponds to the pair of probability measures $(\mu, \nu)$, then
$(\mu_k)_{k \in \bN}$ converges weakly to $\mu$ and $(\nu_k)_{k \in \bN}$ converges weakly to $\nu$.  Conversely, if 
$(\mu_k)_{k \in \bN}$ converges weakly to $\mu$ and $(\nu_k)_{k \in \bN}$ converges weakly to $\nu$, then $\frac{1}{2}(\mu+\nu) = \lambda$,
and if $y$ is the point in the Doob--Martin boundary
that corresponds to the pair $(\mu, \nu)$, then $(y_k)_{k \in \bN}$ converges in the Doob--Martin topology to $y$.
\end{cor}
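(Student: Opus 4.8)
The plan is to establish the two implications of the corollary separately, pivoting on the identity displayed immediately before the corollary together with Proposition~\ref{convergence_criterion}. Throughout I will use that $\tfrac12(\mu_k+\nu_k)$ is the uniform probability measure on the grid $\{\ell/(2N(y_k)):1\le\ell\le 2N(y_k)\}$, so that it converges weakly to Lebesgue measure $\lambda$ on $[0,1]$ as $k\to\infty$ regardless of any other hypothesis.

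\emph{The implication stated second (the one the discussion immediately preceding the corollary renders immediate): weak convergence of $(\mu_k,\nu_k)$ forces $\tfrac12(\mu+\nu)=\lambda$ and $y_k\to y$.} Assume $\mu_k\to\mu$ and $\nu_k\to\nu$ weakly. Then $\tfrac12(\mu_k+\nu_k)$ converges weakly both to $\tfrac12(\mu+\nu)$ and to $\lambda$, so $\tfrac12(\mu+\nu)=\lambda$; in particular $\mu$ and $\nu$ have no atoms, since $\mu(\{t\})+\nu(\{t\})=2\lambda(\{t\})=0$ for every $t$, hence both are diffuse. Fix $m\in\bN$ and $w\in\bW_m$. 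From the product structure, $\mu_k^{\otimes m}\otimes\nu_k^{\otimes m}$ converges weakly to $\mu^{\otimes m}\otimes\nu^{\otimes m}$ on $[0,1]^{2m}$. As observed after its definition, $\cS(w)$ is a finite disjoint union of open sets whose boundaries lie in the union of the finitely many diagonal hyperplanes $\{t_i=t_j\}$, $1\le i<j\le 2m$, and each such hyperplane is $\mu^{\otimes m}\otimes\nu^{\otimes m}$-null by Fubini and diffuseness; so by the portmanteau theorem $\mu_k^{\otimes m}\otimes\nu_k^{\otimes m}(\cS(w))\to\mu^{\otimes m}\otimes\nu^{\otimes m}(\cS(w))$. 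Since the combinatorial prefactor in the displayed identity tends to $1$ as $N(y_k)\to\infty$ for $m$ fixed, I obtain $\binom{y_k}{w}/\binom{N(y_k)}{m}^2\to\mu^{\otimes m}\otimes\nu^{\otimes m}(\cS(w))$ for every $w$ and every $m$, which is exactly the criterion of Proposition~\ref{convergence_criterion} for $y_k\to y$.

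\emph{The implication stated first: $y_k\to y$ forces $\mu_k\to\mu$ and $\nu_k\to\nu$ weakly.} Here I would use compactness of the space $\cP([0,1])^2$ of pairs of Borel probability measures on $[0,1]$ in the weak topology: it suffices to show that every subsequential weak limit of $((\mu_k,\nu_k))$ is $(\mu,\nu)$. Suppose $\mu_{k_j}\to\tilde\mu$ and $\nu_{k_j}\to\tilde\nu$ weakly along a subsequence. Applying the implication just proved to $(y_{k_j})$ gives $\tfrac12(\tilde\mu+\tilde\nu)=\lambda$ and $y_{k_j}\to\tilde y$ in the Doob--Martin topology, where $\tilde y$ is the boundary point attached to $(\tilde\mu,\tilde\nu)$. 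But $y_{k_j}\to y$ as well and the Doob--Martin compactification is Hausdorff, hence $\tilde y=y$; and the correspondence from pairs $(\mu,\nu)$ with $\tfrac12(\mu+\nu)=\lambda$ to boundary points is injective --- $\mu$ is recovered from the law of the associated exchangeable total order, hence from $y$, via the moment formula of the theorem characterizing ergodic exchangeable total orders, after which $\nu=2\lambda-\mu$ --- so $(\tilde\mu,\tilde\nu)=(\mu,\nu)$. Thus every subsequential weak limit equals $(\mu,\nu)$, and by compactness $\mu_k\to\mu$ and $\nu_k\to\nu$ weakly.

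The portmanteau step, the limit of the prefactor, and the reconstruction of $\mu$ from its moments are all routine. The step I expect to need the most care is the last one in the first implication: making the subsequence argument airtight (compactness of $\cP([0,1])^2$ plus uniqueness of weak limits along subsequences), and ensuring that the injectivity of the measure-to-boundary-point correspondence is genuinely available from the characterization theorem and Proposition~\ref{convergence_criterion} rather than being invoked circularly --- once that is pinned down, the argument closes.
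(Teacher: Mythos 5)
Your proposal is correct and follows essentially the same route as the paper: one direction via weak convergence of the product measures, the negligibility of the boundary of $\cS(w)$, the displayed prefactor identity, and Proposition~\ref{convergence_criterion}; the converse via compactness of pairs of probability measures on $[0,1]$ and a subsequence argument that feeds back through the first direction. The only differences are cosmetic --- you justify the null-boundary step through diffuseness and Fubini where the paper invokes Lebesgue-nullness, and you spell out the injectivity of the pair-to-boundary-point correspondence and uniqueness of Doob--Martin limits, which the paper uses implicitly.
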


\begin{proof}
As we have already remarked, if $(\mu_k)_{k \in \bN}$ converges weakly to $\mu$ and $(\nu_k)_{k \in \bN}$ converges weakly to $\nu$ then, since the boundary of
$\cS(w)$ is Lebesgue null for any word $w \in \bW_m$, $m \in \bN$, we have that $\mu_k^{\otimes m} \otimes \nu_k^{\otimes m}(\cS(w))$ converges to $\mu^{\otimes m} \otimes \nu^{\otimes m}(\cS(w))$
so that 
\[
\lim_{k \to \infty} \frac{\binom{y_k}{w}} {{\binom{N(y_k)}{m}}^2}
=
\mu^{\otimes m} \otimes \nu^{\otimes m}(\cS(w)),
\]
and it follows from Proposition~\ref{convergence_criterion} that $(y_k)_{k \in \bN}$ converges to the point $y$ in the Doob--Martin boundary that corresponds to the pair $(\mu,\nu)$.

Conversely, suppose that $(y_k)_{k \in \bN}$ converges to the point $y$ in the Doob--Martin boundary corresponding to the pair $(\mu,\nu)$.  Given any subsequence of
$\bN$ there is, by the compactness in the weak topology of probability measures on $[0,1]$, a further subsequence such that along this further subsequence
$\mu_k$ converges weakly to some probability measure $\mu'$ and $\nu_k$ converges weakly to some probability measure $\nu'$.  Note that
$\frac{1}{2}(\mu'+\nu') = \lambda$.  From the other direection of the corollary,
this implies that along the subsubsequence $y_k$ converges to the point $y'$ in the Doob--Martin boundary corresponding to the pair of
probability measures $(\mu', \nu')$.  Because $y' = y$ it must be the case $(\mu', \nu') = (\mu, \nu)$.  Thus, from any subsequence of
$\bN$ we can extract a further subsequence along which $\mu_k$ converges weakly to $\mu$ and $\nu_k$ converges weakly to $\nu$, and this
implies that $(\mu_k)_{k \in \bN}$ converges weakly to $\mu$ and $(\nu_k)_{k \in \bN}$ converges weakly to $\nu$.
\end{proof}

\section{Example: the Plackett-Luce chain}

In general, there is no simple closed form expression for the transition probabilities of an
infinite bridge $(U_n^\infty)_{n \in \bN_0}$ associated
with a pair of (not necessarily canonical)
diffuse probability measures $\zeta, \eta$ and hence the associated harmonic function $h$.
However, it is possible to obtain such expressions in
the special case where $\zeta$ is the exponential distribution with
rate parameter $\alpha$ and $\eta$ is the exponential distribution with
rate parameter $\beta$.  Given $u \in \bW_n$ and 
$1 \le i \le 2n$, set
\[
\mathbf{A}_i^n(u) := \#\{i \le j \le 2n : u_j = a\}
\]
and
\[
\mathbf{B}_i^n(u) := \#\{i \le j \le 2n : u_j =  b\}.
\]
By the reasoning that goes into the analysis
of the Plackett-Luce or vase model of random permutations
(see, for example, \cite{MR1346107}),
\[
\bP\{U_n^\infty = u\}
=
(n!)^2 \alpha^n \beta^n
\prod_{i=1}^{2n} 
\frac{1}{\mathbf{A}_i^n(u) \alpha + \mathbf{B}_i^n(u) \beta}
\]
--
this is essentially just repeated applications of the elementary result usually
called {\em competing exponentials}: if $S$ and $T$ are independent 
exponentially distributed random variables with rate parameters
$\lambda$ and $\theta$, then the probability of the event $\{S < T\}$
is $\frac{\lambda}{\lambda + \theta}$ and conditional on this event
the random variables $S$ and $T-S$ are independent and exponentially
distributed with rate parameters $\lambda + \theta$ and $\theta$.
(As a check, note that when $\alpha = \beta = \gamma$, say,
this probability is, as expected, $1/\binom{2n}{n}$.)
We also know that
\[
\bP\{U_n^\infty = u \, | \, U_{n+1}^\infty = v\}
=
\frac{\binom{v}{u}}{(n+1)^2}.
\]

It follows that
\[
\begin{split}
& \bP\{U_{n+1}^\infty = v \, | \, U_n^\infty = u\} \\
& \quad =
\frac{\binom{v}{u}}{(n+1)^2}
((n+1)!)^2 \alpha^{n+1} \beta^{n+1}
\prod_{i=1}^{2(n+1)} 
\frac{1}{\mathbf{A}_i^{n+1}(v) \alpha + \mathbf{B}_i^{n+1}(v) \beta} \\
& \qquad \bigg /
(n!)^2 \alpha^n \beta^n
\prod_{i=1}^{2n} 
\frac{1}{\mathbf{A}_i^n(u) \alpha + \mathbf{B}_i^n(u) \beta} \\
& \quad =
\binom{v}{u}
\alpha \beta 
\frac
{\prod_{i=1}^{2n} (\mathbf{A}_i^n(u) \alpha + \mathbf{B}_i^n(u) \beta)}
{\prod_{i=1}^{2(n+1)} (\mathbf{A}_i^{n+1}(v) \alpha + \mathbf{B}_i^{n+1}(v) \beta)}. \\
\end{split}
\]

As a check, when $\alpha = \beta = \gamma$, say, this transition probability is
\[
\binom{v}{u} \frac{(2n)!}{(2(n+1))!} = \frac{\binom{v}{u}}{(2n+2)(2n+1)},
\]
as expected.

The corresponding harmonic function $h$ satisfies
\[
\begin{split}
& \binom{v}{u}
\alpha \beta 
\frac
{\prod_{i=1}^{2n} (\mathbf{A}_i^n(u) \alpha + \mathbf{B}_i^n(u) \beta)}
{\prod_{i=1}^{2(n+1)} (\mathbf{A}_i^{n+1}(v) \alpha + \mathbf{B}_i^{n+1}(v) \beta)} \\
& \quad =
\frac{h(v)}{h(u)} \frac{\binom{v}{u}}{(2n+2)(2n+1)}. \\
\end{split}
\]
We conclude from this that, up to an arbitrary positive constant,
\[
h(w) = \frac{(2m)! \alpha^m \beta^m}
{\prod_{i=1}^{2m} (\mathbf{A}_i^m(w) \alpha + \mathbf{B}_i^m(w) \beta)}
\]
for $w \in \bW_n$.

\vskip 24pt
\noindent
{\bf Acknowledgments.}  We thank two anonymous referees for a number of helpful suggestions that
improved the presentation considerably.

\def\cprime{$'$} \def\cprime{$'$} \def\cprime{$'$}
\providecommand{\bysame}{\leavevmode\hbox to3em{\hrulefill}\thinspace}
\providecommand{\MR}{\relax\ifhmode\unskip\space\fi MR }
\providecommand{\MRhref}[2]{%
  \href{http://www.ams.org/mathscinet-getitem?mr=#1}{#2}
}
\providecommand{\href}[2]{#2}

\end{document}